\documentclass{amsart}
\numberwithin{equation}{section}
\usepackage{booktabs}
\usepackage{graphicx}
\usepackage{algorithm}
\usepackage{algorithmic}
\newtheorem{thm}{Theorem}[section]
\newtheorem{lem}[thm]{Lemma}

\newtheorem{defin}[thm]{Definition}

\newtheorem{ex}[thm]{Example}

\begin{document}

\begin{center}
\textbf{{\large {\ On a Boundary–-Initial Value Problem for
Fractional Differential Equation with
Sequential Caputo derivatives }}}\\
\medskip \textbf{Fayziev Yusuf $^{1},^{2},$, Jumaeva Shakhnoza $^{2}$}\\
\textit{fayziev.yusuf@mail.ru,     shahnozafarhodovna79@gmail.com}\\
\medskip \textit{\ $^{1}$ National University of Uzbekistan, Tashkent, Uzbekistan; \\ $^{2}$ V.~I.~Romanovskiy Institute of Mathematics, Uzbekistan Academy of Sciences,Tashkent, Uzbekistan }
\end{center}

\textbf{Abstract}: In this paper, we investigate a fractional differential equation involving sequential Caputo derivatives, motivated by recent research on fractional models with multiple memory effects. Using techniques inspired by earlier works on sequential fractional operators, we derive the exact analytic solution of the problem in terms of the bivariate Mittag-Leffler function. Additionally, several useful properties of the bivariate Mittag-Leffler function are formulated to support the solution construction. Furthermore, we develop a numerical scheme using a sequential reformulation and the L1-finite element method.

\vskip 0.3cm \noindent {\it AMS 2000 Mathematics Subject
Classifications} : 35R11; 34A12.\\
{\it Key words}: sequential fractional derivatives, bivariate Mittag-Leffler function, boundary-initial value problem, Caputo derivative.

\section{Introduction and Statement of the Problem.}\label{jumaeva:sec:1}

Fractional order derivatives are widely used because they can describe real-world processes more accurately than classical integer-order derivatives. It has become an important mathematical tool for modelling natural laws in many scientific fields, such as \cite{jumaeva:ref:1,jumaeva:ref:2,jumaeva:ref:3}. 
For this purpose, many different types of fractional operators have been developed: Riemann–Liouville derivative, Caputo derivative, generalized fractional derivative, Caputo–Fabrizio derivative, etc. 

From \cite{jumaeva:ref:4}, we know that fractional differentiation operators do not form a semigroup and do not possess the properties of commutativity. For this reason, in multiterm fractional differential equations used for a system with several types of memory or multiple time scales, the reduction of the differential equation’s order is lost. To reduce the order of fractional differential equations, sequential derivatives were introduced by Miller and Ross (see \cite{jumaeva:ref:4}, p. 209).

In \cite{jumaeva:ref:5}, the authors have discussed the following sequential fractional differential equations with three-point boundary conditions:
\begin{equation*}
\begin{cases}
    {}^c D^\alpha (D + \lambda)x(t) = f(t, x(t)), \quad 0 < t < 1, \quad 1 < \alpha \le 2,\\
    x(0) = 0, \quad x'(0) = 0, \quad x(1) = \beta x(\eta), \quad 0 < \eta < 1.
    \end{cases}
\end{equation*}
By using Banach’s contraction principle and Krasnoselskii’s fixed point theorem, they proved the existence and uniqueness of the solutions. In \cite{jumaeva:ref:6,jumaeva:ref:7}, the authors have studied sequential fractional differential equations with different kinds of boundary conditions.

In \cite{jumaeva:ref:8}, the authors have discussed the approximation of the solution to the following type of initial value problem with sequential
Caputo fractional derivatives of orders $ \alpha,\beta \in (0, 1]$:
\begin{equation*}
    \begin{cases}
        D^\beta(D^\alpha x(t))=f(t,x(t)),\\
        D^\alpha x(0)=x_0,\\
        x(0)=x_a.
    \end{cases}
\end{equation*}

In \cite{jumaeva:ref:9}, Sh. Song and Y.Cui considered the existence of solutions for the integral boundary value problems of mixed fractional differential equations with different sequential fractional derivatives under resonance by using Mawhin’s coincidence degree theory.

In \cite{jumaeva:ref:10}, the author studied a Cauchy-type problem associated with the equation
\begin{equation*}
    D_{r,a}^{\alpha,\beta} u(t)=f(t,u(t)), \quad a<t<b,\quad 0\leq r< \alpha<1, \quad 0<\beta<1,
\end{equation*}
where $D_{r, a}^{\alpha,\beta}$ is the weighted sequential fractional derivative: $D_{r, a}^{\alpha,\beta} f(t) = D_a^\alpha(x-a)^r D_a^\beta f(t)$, $f(t)\in C(a,b)\cap L(a,b).$ The author has developed some bounds on the weighted sequential integral and has used them to find a bound on the solution to the Cauchy-type problem.

Over the last two decades, extensive research has been conducted on abstract impulsive and abstract fractional differential equations under various conditions; see references \cite{jumaeva:ref:11, jumaeva:ref:12, jumaeva:ref:13, jumaeva:ref:14} and the sources cited therein.
In \cite{jumaeva:ref:11}, the authors established the existence and uniqueness of mild solutions to abstract sequential fractional differential equations using resolvent operators and a fixed-point theorem.
In \cite{jumaeva:ref:12, jumaeva:ref:13, jumaeva:ref:14}, the authors have obtained existence and uniqueness results for the Langevin-type fractional differential equations with an abstract operator.
In a recent paper \cite{jumaeva:ref:15}, the authors establish necessary and sufficient solvability conditions, as well as the general form of the solution for the linear boundary-value problem associated with a two-term sequential fractional differential equation.

With the help of the ideas from the above articles, we investigate and discuss a fractional differential equation with sequential Caputo derivatives, illustrated as follows:
\begin{equation}\label{jumaeva:eq:1} 
    D_t^\beta(D_t^\alpha u(x,t))+D_t^\beta u(x,t)-u_{xx}(x,t)=f(x,t) \end{equation}
in a domain $\Omega = \{(x, t) : 0 < x < 1, 0 < t <T\}$. Here $f(x, t)$ is a given function, and $D_t^\alpha$ and $D_t^\beta$  are Caputo fractional derivatives of orders $0<\alpha<1$ and $0<\beta<1$, respectively.

We formulate a boundary-initial value problem for Eq.\eqref{jumaeva:eq:1}  as follows:

\textbf{Problem:} To find a solution of Eq.\eqref{jumaeva:eq:1} in $\Omega$, satisfying the initial conditions:
\begin{equation}
   u(x,0)=\varphi(x), \hspace{1cm} 0\leq x \leq 1, \label{jumaeva:eq:2}
\end{equation}
\begin{equation}
     D_t^\alpha u(x,0)=\psi(x),\hspace{1cm} 0\leq x\leq 1, \label{jumaeva:eq:3}
\end{equation}
 and boundary condition:
 \begin{equation} 
     u(0,t)=u(1,t)=0 , \quad  0\leq t\leq T, \label{jumaeva:eq:4}
 \end{equation} 
 where $\varphi(x), \psi(x)$ are given smooth functions.

 The specific condition $\alpha = \beta$ simplifies the sequential derivative $D_t^\beta(D_t^\alpha u)$ into a more symmetric form, which is often studied in the context of fractional diffusion-wave equations or telegraph equations. Lately, the study of fractional telegraph equations has gained significant momentum. Ashurov and Saparbayev have made significant contributions to refining the theory of the Cauchy problem by establishing existence and uniqueness theorems under much more flexible spectral conditions than previously thought possible  \cite{jumaeva:ref:16, jumaeva:ref:17}. This research has also been extended to address non-local time boundary conditions, investigating how non-local parameters influence the overall stability of the system \cite{jumaeva:ref:18}. In addition, the research has also been extended into more complex identification problems, including the determination of unknown time-dependent source functions and coefficients \cite{jumaeva:ref:19, jumaeva:ref:20}.
 \begin{defin}
     The function $u(x,t)\in C(\bar{\Omega})$ is called the regular solution to the problem if it satisfies the properties $D_t^\beta(D_t^\alpha)u(x,t), u_{xx}(x,t)\in C(\Omega)$ and $D_t^\alpha u(x,t)\in C(\bar{\Omega})$ and also satisfies the conditions \eqref{jumaeva:eq:1}-\eqref{jumaeva:eq:4}. \label{jumaeva:def:1}
 \end{defin}
By studying a wide range of published articles on the existence and uniqueness of solutions to fractional boundary value problems, we see that many authors typically use standard methods based on well-known fixed-point techniques to establish the existence of solutions. Using techniques inspired by earlier works on sequential fractional operators, we derive the exact analytic solution of the problem in terms of the bivariate Mittag-Leffler function. Additionally, several useful properties of the bivariate Mittag-Leffler function are formulated to support the solution construction.

This paper aims to examine the existence and uniqueness of solutions for a new multi-term fractional differential equation subject to sequential Caputo derivatives. The structure of the paper is as follows. Section \ref{jumaeva:sec:2} provides essential definitions from fractional calculus and establishes an auxiliary lemma that serves as a foundation for our analysis. Section \ref{jumaeva:sec:3} presents a supplementary problem for obtaining the main result. Section \ref{jumaeva:sec:4} is dedicated to the main theoretical results, where we prove the existence and uniqueness of the regular solution using the Fourier method and series convergence analysis. In Section \ref{jumaeva:sec:5}, we propose a fully discrete numerical scheme based on a sequential reformulation and a graded temporal mesh and present illustrative examples and a detailed parametric study to validate the theoretical findings and explore the system's dynamics. Finally, Section \ref{jumaeva:sec:6} concludes the paper with a summary of our contributions.

\section{Preliminaries.}\label{jumaeva:sec:2}
In this section, we recall some essential information from fractional calculus and the properties of the Mittag-Leffler function and the bivariate Mittag-Leffler function.

\subsection{A concise overview of fractional derivatives and the associated functional spaces.}
The definitions of fractional integrals and derivatives for the function $f:\mathbb{R}_+ \rightarrow H$ are discussed in detail in~\cite{jumaeva:ref:21}. The fractional integral of order $\sigma$ for a function $f(t)$ defined on $\mathbb{R}_+$ is given by:
\begin{equation*}
I_t^\sigma f(t)=\frac{1}{\Gamma(\sigma)} \int_0^t \frac{f(\xi)}{(t-\xi)^{1-\sigma}} d\xi, \quad t>0,
\end{equation*}
where $\Gamma(\sigma)$ is the Euler gamma function. Using this definition, the Caputo fractional derivative of order $\gamma \in (0,1)$ can be defined as:
\begin{equation*}
D_t^\gamma f(t)=I_t^{1-\gamma} \frac{d}{dt}f(t).
\end{equation*}
The mathematical foundation of sequential fractional calculus was rigorously established by Miller and Ross \cite{jumaeva:ref:4}. Specifically, the sequential Caputo fractional derivative of orders $\alpha$ and $\beta$ ($0 < \alpha, \beta \leq 1$) for a function $f(t)$ is defined as the composition of two distinct Caputo operators:
\begin{equation*}
D_{t}^{\beta} \left( D_{t}^{\alpha} f(t) \right) := \left( I_{t}^{1-\beta} \frac{d}{dt} \right) \left( I_{t}^{1-\alpha} \frac{d}{dt} f(t) \right).
\end{equation*}
As emphasized by Miller and Ross (see \cite{jumaeva:ref:4}, p. 209), a critical distinction of the sequential framework is the violation of the classical index law. Unlike standard calculus, the composition of fractional operators is generally non-commutative and does not equate to the single-term derivative of order $\alpha + \beta$:
\begin{equation*}
D_{t}^{\beta} (D_{t}^{\alpha} f(t)) \neq D_{t}^{\alpha+\beta} f(t).
\end{equation*}
This inequality arises due to the accumulation of memory effects and the specific regularity requirements at the intermediate stage. For the sequential operator $D_t^{\beta}(D_t^{\alpha})$ to be well-defined, $f(t)$ must belong to a functional space that ensures the first-order operation $D_t^{\alpha}f(t)$ remains differentiable in the sense required by the subsequent operator $D_t^{\beta}$. 

Let $\overset{\circ}{C}{}^{\,a}[0,1]$  denote the H\"older space of functions $g\in C[0,1]$
satisfying the homogeneous boundary conditions $g(0)=g(1)=0$ and the estimate
\begin{equation*}
|g(x_1)-g(x_2)|\le C\,|x_1-x_2|^{a},\qquad x_1,x_2\in[0,1].
\end{equation*}
We equip $\overset{\circ}{C}{}^{\,a}[0,1]$ with the norm
\begin{equation*}
\|g\|_{\overset{\circ}{C}{}^{\,a}[0,1]}
:=\sup_{0\le x_1<x_2\le1}\frac{|g(x_1)-g(x_2)|}{|x_1-x_2|^{a}}.
\end{equation*}

Let $\bar{\Omega}=[0,1]\times[0,T]$ and assume $f(x,t)$ is continuous on $\overline{\Omega}$.Then, for each
fixed $t$, the expression
\begin{equation*}
\omega_f(\delta;t)=\sup_{|x_1-x_2|\le \delta}\,|f(x_1,t)-f(x_2,t)|,\qquad x_1,x_2\in[0,1],
\end{equation*}
represents the modulus of continuity of $f(x,t)$ with respect to the spatial variable $x$. If there exists a
function $C(t)$, independent of $\delta$, such that
\begin{equation*}
\omega_f(\delta;t)\le C(t)\delta^{a},
\end{equation*}
then we say that $f(x,t)$ belongs to the H\"older space $\overset{\circ}{C}{}^{\,a}_x(\Omega)$, where
\begin{equation*}
\overset{\circ}{C}{}^{\,a}_x(\Omega)=\Bigl\{\,f(x,t)\in C(\Omega):\ \omega_f(\delta;t)\le C(t)\delta^{a},\
f(0,t)=f(1,t)=0\,\Bigr\}.
\end{equation*}
The smallest constant $C(t)$ is called the norm of $f(x,t)$ in the H\"older class
$\overset{\circ}{C}{}^{\,a}_x(\Omega)$, denoted by
\begin{equation*}
\|f\|_{\overset{\circ}{C}{}^{\,a}_x(\Omega)}
=\inf_{t}\inf_{\delta}\frac{|f(x_1,t)-f(x_2,t)|}{\delta^{a}}.
\end{equation*}

The one-variable space $\overset{\circ}{C}{}^{\,a}_2[0,\pi]$ is defined analogously.

\begin{lem}(see \cite{jumaeva:ref:22})
Let $a>\frac12$ and $g\in \overset{\circ}{C}{}^{\,a}[0,1]$.
Then, for every $\sigma\in\big[0,\,a-\frac12\big)$, there exists a constant $C>0$,
independent of $g$, such that
\begin{equation*}
\sum_{k=1}^{\infty} k^{\sigma}\,|g_k|\ \le\ C\,\|g\|_{\overset{\circ}{C}{}^{\,a}[0,1]}.
\end{equation*}
Here $g_k$ are the sine-series Fourier coefficients of $g$:
\begin{equation*}
g_k=2\int_{0}^{1} g(x)\sin(\pi kx)\,dx,\qquad k\in\mathbb{N}.
\end{equation*} \label{jumaeva:lem:1}
\end{lem}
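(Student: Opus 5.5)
This is a Bernstein-type absolute-convergence estimate for sine coefficients. The plan is to split the series into dyadic blocks, bound each block in $\ell^2$ using the Hölder modulus, and then convert to $\ell^1$ with Cauchy--Schwarz. Since $g(0)=g(1)=0$, we extend $g$ to an odd $2$-periodic function $\tilde g$ on $\mathbb{R}$. The extension is Hölder of the same order $a$, with constant at most $2\|g\|_{\overset{\circ}{C}{}^{\,a}[0,1]}$, and the coefficients $g_k$ are (up to a factor) its Fourier coefficients. Also, $\sup|g|\le \|g\|_{\overset{\circ}{C}{}^{\,a}[0,1]}$ because $g(0)=0$.

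\textbf{Step 1: an $L^2$ bound from translation.} For $h>0$, Parseval for the odd extension gives
\begin{equation*}
\int_{-1}^{1}|\tilde g(x+h)-\tilde g(x-h)|^2\,dx = c\sum_{k\ge1} |g_k|^2 \sin^2(\pi k h),
\end{equation*}
with an absolute constant $c>0$. The left side is at most $C\,\|g\|^2\,(2h)^{2a}$ by the Hölder estimate.

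\textbf{Step 2: a dyadic block estimate.} Fix $m\ge0$, take $h=1/(4\cdot 2^{m})$, and consider $2^m\le k<2^{m+1}$. Then $\pi k h\in[\pi/4,\pi/2)$, so $\sin^2(\pi kh)\ge 1/2$. Step 1 therefore yields
\begin{equation*}
\sum_{2^m\le k<2^{m+1}}|g_k|^2\le C\,\|g\|^2\, 2^{-2am}.
\end{equation*}

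\textbf{Step 3: Cauchy--Schwarz on each block.} Since the block has $2^m$ terms and $k^\sigma\le 2^{(m+1)\sigma}$ there,
\begin{equation*}
\sum_{2^m\le k<2^{m+1}} k^\sigma|g_k| \le 2^{(m+1)\sigma}\,2^{m/2}\Bigl(\sum_{2^m\le k<2^{m+1}}|g_k|^2\Bigr)^{1/2}\le C\,\|g\|\,2^{m(\sigma+\frac12-a)}.
\end{equation*}
Summing over $m\ge0$ gives a geometric series. It converges exactly because $\sigma<a-\tfrac12$, which produces the stated bound with $C$ depending only on $a$ and $\sigma$.

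The main obstacle is Step 1 rather than the summation. We must check that the odd periodic extension really preserves the Hölder condition across the points $x=0$ and $x=1$, which is where the boundary conditions $g(0)=g(1)=0$ are used. We must also check that the translation identity with only sine modes holds up to harmless constants. If the extension argument gets awkward, I would instead compute $\int_0^1 |g(x+h)-g(x)|^2dx$ directly on $[0,1-h]$. The boundary strips of width $h$ contribute $O(h\cdot h^{2a})$ by Hölder continuity and the vanishing endpoints, and this is negligible against $h^{2a}$.
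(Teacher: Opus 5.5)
Your proof is correct, and it supplies something the paper does not: the paper gives no argument for this lemma and simply imports it from \cite{jumaeva:ref:22}. What you wrote is the classical Bernstein proof of absolute convergence for H\"older functions of order $a>\frac12$, adapted to the sine system. It proceeds by dyadic blocks, an $L^2$ bound from the translation identity, and Cauchy--Schwarz.

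The point you flagged as the main obstacle closes cleanly:
\begin{itemize}
\item On each interval between consecutive integers, the odd $2$-periodic extension $\tilde g$ is $\pm g$ composed with an isometry.
\item $\tilde g$ vanishes at every integer because $g(0)=g(1)=0$.
\item An interval of length less than $1$ contains at most one integer. Hence, by concavity of $t\mapsto t^a$, $|\tilde g(x)-\tilde g(y)|\le 2^{1-a}\|g\|\,|x-y|^a$ there. Only $|x-y|=2h\le\frac12$ is needed in Step 1.
\item $\sin(\pi k(x+h))-\sin(\pi k(x-h))=2\cos(\pi kx)\sin(\pi kh)$, and $\{\cos(\pi kx)\}_{k\ge1}$ is orthonormal on $[-1,1]$.
\item The sine coefficients of $\tilde g$ on $[-1,1]$ are exactly your $g_k$.
\end{itemize}
So Step 1 holds with $c=4$.

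One caution about your fallback: it would not stand on its own. Translation is not diagonal in the sine system on $[0,1]$, so $\int_0^{1-h}|g(x+h)-g(x)|^2\,dx$ has no Parseval expression in terms of the $g_k$. The extension is genuinely needed, but as shown above it costs nothing.

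Compared with the bare citation, your route shows exactly where each hypothesis enters. The condition $g(0)=g(1)=0$ is used only to keep $\tilde g$ H\"older across the integers. The restriction $\sigma<a-\frac12$ is used only for the geometric sum over blocks. It also proves slightly more, since only the $L^2$ modulus of continuity of $\tilde g$ is used. The same estimate therefore holds, with the $L^2$ constant in place of $\|g\|$, whenever $\|\tilde g(\cdot+h)-\tilde g(\cdot-h)\|_{L^2(-1,1)}=O(h^{a})$.
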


 \subsection{Definition and properties of the bivariate Mittag-Leffler function}
 The function
\begin{equation*}
E_{\rho, \mu}(z)= \sum\limits_{k=1}^\infty \frac{z^k}{\Gamma(\rho
k+\mu)}, \quad \rho>0,\quad  \mu \in \mathbb{C},\quad  z\in\mathbb{C}
\end{equation*}
is called the Mittag-Leffler function with two parameters and was investigated by Dzherbashyan  \cite{jumaeva:ref:23}.

\begin{defin}
The generalization of the Mittag-Leffler function  has the form:
\begin{equation*}
E^{\gamma}_{\rho, \mu}(z)= \sum\limits_{k=1}^\infty \frac{(\gamma)_{k}}{\Gamma(\rho k+\mu)}\cdot\frac{z^{k}}{k!},
\end{equation*}
where $z \in \mathbb{C}$, $\rho$, $\mu$, and $\gamma$ are arbitrary positive constants, and $(\gamma)_{k}$ is the Pochhammer symbol defined as
\begin{equation*}
(\gamma)_{k}=\frac{\Gamma(\gamma+k)}{\Gamma(\gamma)}=\begin{cases}
  & 1,\quad  k=1, \gamma\neq0 ,\\
 & \gamma(\gamma+1)(\gamma+2)....(\gamma+k-1), \quad k\in\mathbb{N} .\\
\end{cases}
\end{equation*} \label{jumaeva:def:2}
\end{defin} 
This function was introduced by Prabhakar in \cite{jumaeva:ref:24}, and for this reason, we refer to the generalization of the Mittag-Leffler function as the Prabhakar function. In the special case $\gamma=1$, we can recover the classical two-parameter Mittag-Leffler function.
\begin{lem}(see \cite{jumaeva:ref:25} p.47)
Let $\alpha, \beta, \rho \in \mathbb{C}$ with $\Re(\beta) > 0$. The Laplace transform of the function weighted by the three-parameter Mittag-Leffler kernel is expressed as:
\begin{equation}\label{jumaeva:eq:5}
    \mathcal{L} \left[ t^{\beta-1} E_{\alpha, \beta}^{\rho} (\lambda t^{\alpha}) \right] (s) = \frac{s^{\alpha\rho-\beta}}{(s^{\alpha} - \lambda)^{\rho}},
\end{equation}
where $\Re(s) > 0$, $\lambda \in \mathbb{C}$, and $| \lambda s^{-\alpha} | < 1.$ \label{jumaeva:lem:2}
\end{lem}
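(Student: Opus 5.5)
We prove \eqref{jumaeva:eq:5} by direct term-by-term Laplace transformation of the series defining the Prabhakar function, and then sum the result with the generalized binomial theorem. We take the series in Definition \ref{jumaeva:def:2} in its standard form, starting from $k=0$ with $(\gamma)_0=1$; the $k=0$ term is needed for the identity to hold. We write
\begin{equation*}
t^{\beta-1}E^{\rho}_{\alpha,\beta}(\lambda t^{\alpha})=\sum_{k=0}^{\infty}\frac{(\rho)_k\,\lambda^k}{k!\,\Gamma(\alpha k+\beta)}\,t^{\alpha k+\beta-1}.
\end{equation*}
Each power has a Laplace transform given by the elementary formula $\mathcal{L}[t^{\nu-1}](s)=\Gamma(\nu)s^{-\nu}$, valid for $\Re\nu>0$ and $\Re s>0$. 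With $\nu=\alpha k+\beta$ the Gamma factors cancel exactly. For this we need $\Re(\alpha k+\beta)>0$ for all $k$, which holds under $\Re\beta>0$ provided $\Re\alpha>0$. We assume $\Re\alpha>0$ tacitly, as is implicit for the function to be a Mittag-Leffler type kernel. Formally we then obtain
\begin{equation*}
\sum_{k=0}^{\infty}\frac{(\rho)_k}{k!}\lambda^k s^{-\alpha k-\beta}=s^{-\beta}\sum_{k=0}^{\infty}\frac{(\rho)_k}{k!}\bigl(\lambda s^{-\alpha}\bigr)^k=s^{-\beta}\bigl(1-\lambda s^{-\alpha}\bigr)^{-\rho},
\end{equation*}
using the binomial series $(1-z)^{-\rho}=\sum_k(\rho)_k z^k/k!$ for $|z|<1$. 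Rewriting gives $s^{-\beta}s^{\alpha\rho}(s^\alpha-\lambda)^{-\rho}$, which is the right-hand side of \eqref{jumaeva:eq:5}. The powers are principal branches, and the identity $(1-\lambda s^{-\alpha})^{-\rho}=s^{\alpha\rho}(s^\alpha-\lambda)^{-\rho}$ is to be understood in that sense for $\Re s>0$.

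The main obstacle is justifying the interchange of summation and integration. We would first establish it for real $s$ large enough that $|\lambda|(\Re s)^{-\Re\alpha}<1$. For this we apply Fubini/Tonelli, which requires bounding the sum of absolute values
\begin{equation*}
\sum_k \frac{|(\rho)_k||\lambda|^k}{k!\,|\Gamma(\alpha k+\beta)|}\int_0^\infty t^{\Re(\alpha k+\beta)-1}e^{-\Re s\, t}\,dt.
\end{equation*}
The integrals give $\Gamma(\Re(\alpha k+\beta))(\Re s)^{-\Re(\alpha k+\beta)}$. The ratio $\Gamma(\Re(\alpha k+\beta))/|\Gamma(\alpha k+\beta)|$ grows at most geometrically in $k$ by Stirling's formula when $\alpha$ is complex; it is identically $1$ for real parameters. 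Also $|(\rho)_k|/k!\le (|\rho|)_k/k!$, which has radius of convergence $1$. Hence the dominating series converges for $\Re s$ sufficiently large.

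Finally, both sides of \eqref{jumaeva:eq:5} are analytic in $s$ on the region $\{\Re s>0,\ |\lambda s^{-\alpha}|<1\}$. The left side is analytic because the Prabhakar function is entire, so $t^{\beta-1}E^\rho_{\alpha,\beta}(\lambda t^\alpha)$ has at most exponential growth of order $|\lambda|^{1/\Re\alpha}$ and its Laplace transform is analytic to the right of its abscissa. We then extend the identity from large real $s$ to the whole stated region by the identity theorem, arguing on the connected component containing the large real half-line. For the paper's purposes ($\alpha,\beta,\rho$ real and positive) the Gamma-ratio bound is trivial, so this extension is the only delicate point.
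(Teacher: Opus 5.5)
\textbf{The final step fails: the identity cannot be extended to the whole stated region.}

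The paper gives no proof of this lemma; it only cites \cite{jumaeva:ref:25}. Your core computation is the standard proof: transform each power $t^{\alpha k+\beta-1}$, cancel $\Gamma(\alpha k+\beta)$, and sum with the binomial series. You are also right that the series must start at $k=0$ and that $\Re\alpha>0$ is needed.

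The gap is in your last paragraph. Your only justification for analyticity of the left-hand side is that a Laplace integral is analytic to the right of its abscissa of convergence. For real $\alpha>0$ that abscissa can be as large as $|\lambda|^{1/\alpha}$, for instance when $\lambda>0$. The stated region is $\{\Re s>0,\ |\lambda s^{-\alpha}|<1\}=\{\Re s>0,\ |s|>|\lambda|^{1/\alpha}\}$. It is not contained in that half-plane: it contains points $s=\varepsilon+iR$ with $R$ large, and there the left-hand side need not exist.

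A concrete counterexample is $\alpha=\beta=\rho=1$, $\lambda=2$. The left-hand side is $\int_0^\infty e^{(2-s)t}\,dt$. At $s=1+10i$ this integral diverges, even though $\Re s>0$ and $|\lambda/s|<1$, while the right-hand side $1/(s-2)$ is finite. So the identity theorem cannot carry the formula to ``the whole stated region.'' In fact no argument can, because the lemma as literally written is false there. The correct condition is $\Re s>|\lambda|^{1/\alpha}$; alternatively, the left side must be read as an analytic continuation rather than a convergent integral.

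\textbf{What your argument does prove.} It establishes the formula on $\Re s>|\lambda|^{1/\alpha}$, and there you do not need the identity theorem at all. Your Tonelli bound works verbatim for complex $s$, because $|e^{-st}|=e^{-t\Re s}$ and $|\lambda s^{-\alpha}|\le|\lambda|(\Re s)^{-\alpha}<1$. So the dominating series converges and the binomial summation is legitimate on that whole half-plane, with no detour through real $s$.

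That restricted version is all the paper needs. The paper uses the lemma only to invert Laplace transforms, and a Laplace transform is determined by its values on any right half-plane. Going further requires decay information on the kernel along the negative real axis, not merely the fact that the Prabhakar function is entire. An example would be all of $\Re s>0$ in the paper's case $\lambda=-(\pi k)^2<0$ with exponent $\alpha+\beta<2$.
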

\begin{defin}
    The bivariate Mittag-Leffler function, denoted by $E_2(\cdot,\cdot)$, is defined by the double power series:
\begin{align*}
   E_2(x,y)&= E_2 \left( 
    \begin{matrix} 
    \gamma_1, \alpha_1, \beta_1; \gamma_2, \alpha_2; \\ 
    \delta_1, \alpha_3, \beta_2; \delta_2, \alpha_4; \delta_3, \beta_3; 
    \end{matrix} 
    \, \middle| \, 
    \begin{matrix} 
    x \\ y 
    \end{matrix} 
    \right) \\&= \sum_{m=0}^{\infty}\sum_{n=0}^\infty \frac{(\gamma_1)_{\alpha_1 m + \beta_1 n} (\gamma_2)_{\alpha_2 m} \quad x^m y^n}{\Gamma(\delta_1 + \alpha_3 m + \beta_2 n) \Gamma(\delta_2 + \alpha_4 m) \Gamma(\delta_3 + \beta_3 n)},
\end{align*}
where the parameters and variables satisfy the conditions \begin{equation*}
\gamma_1, \gamma_2, \delta_1, \delta_2, \delta_3, x, y \in \mathbb{C} \quad \text{and} \quad \min\{\alpha_1, \alpha_2, \alpha_3, \alpha_4, \beta_1, \beta_2, \beta_3\} > 0.
\end{equation*} \label{jumaeva:def:3}
\end{defin}
This function was introduced by Garg et al. without an exhaustive study in \cite{jumaeva:ref:26}. Subsequent research has established key properties for this function, particularly Euler-type integral representations, which facilitate the transformation of the series into integral forms \cite{jumaeva:ref:27} 
and asymptotic estimations, providing bounds for the function's behaviour as $|x|$ or $|y|$ approach infinity \cite{jumaeva:ref:28}, \cite{jumaeva:ref:29}.

The following lemmas are valid for this function:
\begin{lem}(see \cite{jumaeva:ref:30})
The bivariate Mittag-Leffler  function $E_2(x,y)$ converges for all finite complex variables $x, y \in \mathbb{C}$ provided that the following inequality holds:
\begin{equation}\label{jumaeva:eq:6}
    \min \{ \Delta_1, \Delta_2 \} > 0,
\end{equation}
where the parameters $\Delta_1$ and $\Delta_2$ are defined by the differences between the denominator and numerator characteristic exponents:
\begin{equation*}
    \Delta_1 = \alpha_3 + \alpha_4 - \alpha_1 - \alpha_2, \quad
    \Delta_2 = \beta_2 + \beta_3 - \beta_1.
\end{equation*}\label{jumaeva:lem:3}
\end{lem}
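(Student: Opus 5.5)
We show that the double power series defining $E_2(x,y)$ converges absolutely for all finite $x,y$ by estimating the general term with Stirling's formula and then comparing with a convergent two-variable series. Write the general term as $A_{m,n}x^m y^n$ with
\begin{equation*}
A_{m,n}=\frac{\Gamma(\gamma_1+\alpha_1 m+\beta_1 n)\,\Gamma(\gamma_2+\alpha_2 m)}{\Gamma(\gamma_1)\Gamma(\gamma_2)\,\Gamma(\delta_1+\alpha_3 m+\beta_2 n)\,\Gamma(\delta_2+\alpha_4 m)\,\Gamma(\delta_3+\beta_3 n)},
\end{equation*}
using $(\gamma)_k=\Gamma(\gamma+k)/\Gamma(\gamma)$. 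The finitely many indices where some numerator Gamma sits at a pole, or where $\gamma_i$ is a nonpositive integer (which makes the series terminate in the corresponding direction), are handled separately. Since they affect only finitely many terms or reduce the series to a one-variable one, they do not change convergence. It therefore suffices to show
\begin{equation*}
\sum_{m,n\ge 0}|A_{m,n}|\,R^{m+n}<\infty \quad\text{for every } R>0.
\end{equation*}

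\textbf{Key step: a uniform Stirling bound.} For $a>0$ and fixed complex $c$, Stirling gives, as $k\to\infty$,
\begin{equation*}
\log|\Gamma(c+ak)|=ak\log(ak)-ak+O(\log k).
\end{equation*}
We apply this with $k$ replaced by the linear forms $\alpha_1 m+\beta_1 n$, $\alpha_3 m+\beta_2 n$, $\alpha_2 m$, $\alpha_4 m$ and $\beta_3 n$. The aim is an estimate of the form
\begin{equation*}
\log|A_{m,n}|\le -\Delta_1\, m\log m-\Delta_2\, n\log n+C(m+n)+O(\log(m+n+2)).
\end{equation*}
The crude leading behaviour is $(\alpha_1+\alpha_2-\alpha_3-\alpha_4)m\log m$ in $m$ and $(\beta_1-\beta_2-\beta_3)n\log n$ in $n$. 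The difficulty is the mixed terms. One has
\begin{equation*}
(\alpha_1 m+\beta_1 n)\log(\alpha_1 m+\beta_1 n)-(\alpha_3 m+\beta_2 n)\log(\alpha_3 m+\beta_2 n),
\end{equation*}
and this must be split between the $m$- and $n$-parts. I plan to use convexity and superadditivity of $x\mapsto x\log x$. Writing $N=m+n$, we have
\begin{equation*}
(pm+qn)\log(pm+qn)=pm\log N+qn\log N+O(N).
\end{equation*}
Then the remaining $\log N$ versus $\log m$ (and $\log n$) discrepancies combine with the one-variable Gammas, where $\log m\le\log N$. This step, with careful bookkeeping of the signs of these discrepancies, is the main obstacle. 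If it fails in general, I would try instead to bound the ratio
\begin{equation*}
\frac{\Gamma(\gamma_1+\alpha_1m+\beta_1n)}{\Gamma(\delta_1+\alpha_3m+\beta_2n)}
\end{equation*}
by a product of one-variable factors via Beta-function (Euler integral) representations.

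\textbf{Conclusion.} Once the bound holds, we get
\begin{equation*}
|A_{m,n}|R^{m+n}\le C\,(m+n+2)^K\, e^{-\Delta_1 m\log m+C'm}\, e^{-\Delta_2 n\log n+C'n}.
\end{equation*}
This is a product of two terms that decay faster than geometrically, by \eqref{jumaeva:eq:6}, times polynomial growth. Hence the double sum converges. The convergence is uniform on compact sets, so $E_2$ is an entire function of $(x,y)$.
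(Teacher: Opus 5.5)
Your route differs from the paper's. The paper gives no argument of its own: it states the lemma with a reference and says the conditions follow by specializing the general convergence criteria for $\overline{F}_D^{(3)}$ in \cite{jumaeva:ref:30}. A direct Stirling estimate is a good self-contained alternative, and the bound you aim for is in fact true. However, as written the decisive step is left open, and the mechanism you propose for it does not close it. That mechanism is sign bookkeeping using only $\log m\le\log N$. After your treatment of the mixed Gammas, the $m$-part is
\begin{equation*}
(\alpha_1-\alpha_3)m\log N+(\alpha_2-\alpha_4)m\log m .
\end{equation*}
When $\alpha_1>\alpha_3$ and $\alpha_2<\alpha_4$, you need an \emph{upper} bound for $m\log N$ in terms of $m\log m$. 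That is the reverse of $\log m\le\log N$, and no sign argument gives it. The discrepancy $m\log(N/m)$ is not even bounded: for $m\approx\sqrt n$ it is about $\tfrac12\sqrt n\log n$. The same sign pattern occurs in the $n$-part of the very function the paper uses, where $\beta_1=1>\beta_2=\alpha$ and $\beta_3=1$. So this case cannot be avoided.

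The missing ingredient is that $t\log t$ is bounded on bounded subsets of $[0,\infty)$; for instance $0\le t\log(1/t)\le 1/e$ on $(0,1]$. Take any linear form $L=pm+qn$ with $p,q\ge 0$, including the one-variable forms $\alpha_2m$, $\alpha_4m$, $\beta_3n$. Then
\begin{equation*}
L\log L-L\log N=N\,\tfrac{L}{N}\log\tfrac{L}{N}=O(N)
\end{equation*}
uniformly in $(m,n)$, because $L/N\in[0,\max\{p,q\}]$. So convert every $L\log L$ into $L\log N$, not only the mixed ones. Stirling in the form $\log|\Gamma(c+L)|=L\log L-L+O(\log(L+2))$ then gives
\begin{equation*}
\log|A_{m,n}|=-(\Delta_1m+\Delta_2n)\log N+O(N)\le-\min\{\Delta_1,\Delta_2\}\,N\log N+CN .
\end{equation*}
There are $N+1$ pairs with $m+n=N$, so $\sum_N (N+1)R^N e^{-\delta N\log N+CN}<\infty$ finishes the proof. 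Since $-\Delta_1 m\log N\le-\Delta_1 m\log m$ for $\Delta_1>0$, this also yields the product bound you wanted.

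With this fix your argument is complete. It also buys something the citation does not: an explicit super-geometric decay rate for the coefficients, uniformly in all directions of $(m,n)$.
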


The convergence conditions for the function $E_2(x,y)$ are derived directly from the general convergence criteria established for the three-variable function $\overline{F}_D^{(3)}$ in \cite{jumaeva:ref:30}.

\begin{lem}
Let $0 < \alpha <2\beta < 2$ and $\gamma > 0$. Assume that $\alpha\pi/2 < \mu < \min\{\alpha\pi, \pi\}$, and the variable $x$ satisfies $\mu \le |\arg(x)| \le \pi$. Furthermore, assume there exists a constant $K > 0$ such that $-K \le y \le 0$. Then there exists a constant $C>0$ depending only on $\mu, K, \alpha, \beta$, and $\gamma$ such that
\begin{equation*}
E_2(x,y) := E_{2}\!\left(
\begin{array}{c}
1, 1, 1;\, 1, 0\\
\gamma, \alpha, \beta;\, 1, 1;\, 1, 1
\end{array}
\Bigg|\begin{array}{c} x\\ y\end{array}\right) \le \frac{C}{1+|x|}.
\end{equation*}\label{jumaeva:lem:4}
\end{lem}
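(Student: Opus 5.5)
The plan is to first reduce $E_2(x,y)$ to a series of Prabhakar functions, then obtain a contour-integral (inverse Laplace) representation, and finally estimate that integral in the manner of the classical bound $|E_{\alpha,\gamma}(x)|\le C/(1+|x|)$ for $\mu\le|\arg x|\le\pi$.

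\emph{Step 1 (Prabhakar series).} With the indicated parameters ($\gamma_1=\alpha_1=\beta_1=\gamma_2=1$, $\alpha_2=0$, $\delta_1=\gamma$, $\alpha_3=\alpha$, $\beta_2=\beta$, $\delta_2=\alpha_4=\delta_3=\beta_3=1$), Definition \ref{jumaeva:def:3} gives
\begin{equation*}
E_2(x,y)=\sum_{m,n\ge0}\frac{(m+n)!}{m!\,n!}\,\frac{x^m y^n}{\Gamma(\gamma+\alpha m+\beta n)}
=\sum_{n=0}^\infty y^n\,E^{\,n+1}_{\alpha,\gamma+\beta n}(x),
\end{equation*}
since $(m+n)!/n!=(n+1)_m$. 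Here $\Delta_1=\alpha>0$ and $\Delta_2=\beta>0$, so Lemma \ref{jumaeva:lem:3} justifies the rearrangement.

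\emph{Step 2 (Laplace symbol and Hankel representation).} Applying Lemma \ref{jumaeva:lem:2} termwise to $t^{\gamma+\beta n-1}E^{n+1}_{\alpha,\gamma+\beta n}(\lambda t^\alpha)$ and summing the resulting geometric series in $\nu s^{-\beta}(s^\alpha-\lambda)^{-1}s^{\alpha}$ gives
\begin{equation*}
\mathcal L\bigl[t^{\gamma-1}E_2(\lambda t^\alpha,\nu t^\beta)\bigr](s)=\frac{s^{\alpha-\gamma}}{s^\alpha-\lambda-\nu s^{\alpha-\beta}} .
\end{equation*}
Setting $t=1$ and inverting along a Hankel-type contour $\Gamma(\epsilon,\theta)$ (rays $\arg s=\pm\theta$, $|s|\ge\epsilon$, joined by an arc $|s|=\epsilon$) yields
\begin{equation*}
E_2(x,y)=\frac1{2\pi i}\int_{\Gamma(\epsilon,\theta)}\frac{e^{s}s^{\alpha-\gamma}}{s^\alpha-y\,s^{\alpha-\beta}-x}\,ds ,
\end{equation*}
valid when no zeros of the denominator lie to the right of the contour. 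I choose $\theta$ with $\pi/2<\theta<\min\{\pi,\mu/\alpha\}$, which is possible because $\mu>\alpha\pi/2$.

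\emph{Step 3 (estimate).} For $|x|$ bounded the claim follows from continuity of the entire function on the compact set $|x|\le R$, $y\in[-K,0]$. For large $|x|$ I would use the identity
\begin{equation*}
\frac{1}{s^\alpha-ys^{\alpha-\beta}-x}=-\frac1x+\frac{s^\alpha-ys^{\alpha-\beta}}{x\,(s^\alpha-ys^{\alpha-\beta}-x)} .
\end{equation*}
The first term integrates to $-\frac1x\cdot\frac{1}{\Gamma(\gamma-\alpha)}$ (zero at poles of $\Gamma$). For the second term I need the lower bound
\[
|s^\alpha-ys^{\alpha-\beta}-x|\ge c\,(|s^\alpha|+|x|)
\]
uniformly on the contour. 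On the rays $\arg(s^\alpha)=\pm\alpha\theta$ and $\arg(-ys^{\alpha-\beta})=\pm(\alpha-\beta)\theta$, with $-y\ge0$. The hypothesis $\alpha<2\beta$, i.e. $|\alpha-\beta|<\beta$ together with $\alpha<\beta+\ldots$, is what should keep both of these arguments on the same side, strictly away from the sector $\mu\le|\arg x|\le\pi$. The sum $s^\alpha-ys^{\alpha-\beta}$ then lies in a closed sector $|\arg|\le\max\{\alpha\theta,|\alpha-\beta|\theta\}<\mu$, which is separated by a fixed angle from $x$. This gives the lower bound, and the remaining integral converges thanks to $e^{s}$ on the rays, giving $O(1/|x|)$.

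\emph{Main obstacle.} The delicate step is this angular separation: when $\beta>\alpha$ the term $s^{\alpha-\beta}$ has negative power and large modulus near $s=0$, and its argument is $\mp(\beta-\alpha)\theta$. I would first try to verify $(\beta-\alpha)\theta<\mu$ using $\beta-\alpha<\beta<1$ and $\theta<\pi$, adjusting $\theta$ if necessary. I would also choose $\epsilon$ small depending on $K$, so that the arc contribution is controlled using $K$; this is why $C$ depends on $K$. If this fails for some parameter range, the fallback is to bound each Prabhakar term in Step 1 by $C\,\Gamma$-type constants times $(1+|x|)^{-1}$ and sum against $K^n/n!$-type decay inherited from $1/\Gamma(\gamma+\beta n)$.
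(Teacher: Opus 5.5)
Your route matches the paper's: the contour integral you reach in Step~2 is the paper's representation (\ref{jumaeva:eq:7}) with numerator and denominator multiplied by $z^{\alpha}$. The paper then estimates it by appeal to its cited Lemma~5, adding that $yz^{-\beta}$ stays bounded on the contour.

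The gap is in Step~3. There the lower bound $|s^\alpha-ys^{\alpha-\beta}-x|\ge c(|s|^\alpha+|x|)$ is supposed to come from angular separation, and that only works for $\alpha\ge\beta$. The hypothesis $\alpha<2\beta$ does not put $s^\alpha$ and $-ys^{\alpha-\beta}$ on the same side of the real axis, and it allows $\beta>\alpha$.

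Take $\alpha=0.1$, $\beta=0.9$. Then $\mu<0.1\pi$, while $\theta>\pi/2$ (needed for $e^{s}$ to decay on the rays) forces $(\beta-\alpha)\theta>0.4\pi>\mu$. So on the ray $\arg s=\theta$ the term $-ys^{\alpha-\beta}$ points into the sector $\mu\le|\arg x|\le\pi$. Concretely, suppose $\epsilon^{\beta}$ is small compared with $K$, and take $y=-K$, $s=\epsilon e^{i\theta}$. Then $x=s^\alpha-ys^{\alpha-\beta}$ has $|\arg x|$ close to $(\beta-\alpha)\theta$, so it is admissible, and the denominator vanishes at a point of your contour.

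No choice of $\theta$ repairs this. Your plan to take $\epsilon$ small goes the wrong way, since $|ys^{\alpha-\beta}|$ is largest on the arc. The fallback is not an argument as stated either: everything hinges on how the constants in the termwise Prabhakar bounds grow with $n$, and you do not control that.

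The missing idea is to treat $ys^{\alpha-\beta}$ as a perturbation controlled by its modulus, with $\epsilon$ taken \emph{large}.
\begin{itemize}
\item With $c=\sin\frac{\mu-\alpha\theta}{2}$, you have $|s^\alpha-x|\ge c(|s|^\alpha+|x|)$ whenever $|\arg s|\le\theta$ and $\mu\le|\arg x|\le\pi$.
\item For $|s|\ge\epsilon$, you have $|ys^{\alpha-\beta}|\le K\epsilon^{-\beta}|s|^{\alpha}$.
\item Choosing $\epsilon^{\beta}\ge 2K/c$ then gives $|s^\alpha-ys^{\alpha-\beta}-x|\ge\frac{c}{2}(|s|^\alpha+|x|)$ on the whole closed region $\{|s|\ge\epsilon,\ |\arg s|\le\theta\}$. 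This holds for every admissible $x$ and regardless of the sign of $\alpha-\beta$.
\end{itemize}

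This one bound does two jobs. It justifies the deformation onto $\Gamma(\epsilon,\theta)$: the zeros that do appear near the origin when $\beta>\alpha$ and $|x|$ is large lie inside the arc. It also yields directly
$|E_2(x,y)|\le\frac{1}{\pi c}\int_{\Gamma(\epsilon,\theta)}\frac{|e^{s}|\,|s|^{\alpha-\gamma}}{|s|^\alpha+|x|}\,|ds|\le\frac{C}{1+|x|}$,
with the dependence on $K$ entering through $\epsilon$. Your splitting identity and the compactness step then become unnecessary.

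This is what the paper's remark that $yz^{-\beta}$ ``remains finite and does not alter the dominant behaviour'' amounts to. It is also the only way your termwise fallback could close: via geometric decay $(K/(c\epsilon^{\beta}))^{n}$, not via decay of $1/\Gamma(\gamma+\beta n)$.
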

\begin{proof}
We utilize the Hankel-type integral representation of the bivariate Mittag-Leffler function(see \cite{jumaeva:ref:31} with $\delta=1$):
\begin{equation}
E_2(x,y) = \frac{1}{2\pi i} \int_{\mathcal{H}} \frac{e^{z} z^{-\gamma}}{1 - xz^{-\alpha} - yz^{-\beta}} \, dz,\label{jumaeva:eq:7}
\end{equation}
where $\mathcal{H}$ is the standard Hankel contour. 

The Hankel-type integral representation \eqref{jumaeva:eq:7} for the bivariate Mittag-Leffler function $  E_2(x, y)  $ aligns precisely with the special case $  m=2  $ of Lemma 5 in \cite{jumaeva:ref:32}, under the specified parameters $  0 < \alpha <2\beta< 2  $  and $  \gamma>0  $. To determine the asymptotic behavior as $x \to -\infty$, we rely on the argument condition $\mu \le |\arg(x)| \le \pi$, which validates the choice of the contour $\gamma(R, \theta)$ used in the proof of Lemma 5 of \cite{jumaeva:ref:32}. Additionally, the assumption $-K \le y \le 0$ ensures that the second variable remains bounded on the negative real axis. Thus, the term $yz^{-\beta}$ remains finite and does not alter the dominant asymptotic behavior governed by $xz^{-\alpha}$ along the deformed contour.  This correspondence ensures the bound $E_2(x, y) \leq \frac{C}{1 + |x|}$ holds uniformly.
\end{proof}
\begin{lem}
The following relations hold for $0<\alpha<1$:
\begin{enumerate}
    \item   \[
    \int_0^t \eta^{\delta_1-1 }E_2\left( \begin{array}{c}\gamma_1,\alpha_1, \beta_1 ; \gamma_2, \alpha_2 ; \\\delta_1, \alpha_3, \beta_2 ;\delta_2, \alpha_4; \delta_3, \beta_3 ; \end{array} \left| \begin{array}{c}\omega_1 \eta^{\alpha_3} \\\omega_2 \eta^{\beta_2} \end{array} \right.\right) d\eta
    \]
    \begin{equation} \label{jumaeva:eq:8}
    =t^{\delta_1} E_2\left( \begin{array}{c}\gamma_1,\alpha_1, \beta_1 ; \gamma_2, \alpha_2 ; \\\delta_1+1, \alpha_3, \beta_2 ;\delta_2, \alpha_4; \delta_3, \beta_3 ; \end{array} \left| \begin{array}{c}\omega_1 t^{\alpha_3} \\\omega_2 t^{\beta_2}\end{array}\right.\right).\end{equation}
    \item  
    \begin{equation*}
    D_t^\alpha\bigg[t^{\delta_1-1 }E_2\left(\begin{array}{c}\gamma_1,\alpha_1, \beta_1 ; \gamma_2, \alpha_2 ; \\\delta_1, \alpha_3, \beta_2 ;\delta_2, \alpha_4; \delta_3, \beta_3 ; \end{array} \left| \begin{array}{c}\omega_1 t^{\alpha_3} \\\omega_2 t^{\beta_2} \end{array} \right.\right)\bigg]
    \end{equation*}
    \begin{equation}\label{jumaeva:eq:9}
    =t^{\delta_1-\alpha-1} E_2\left( \begin{array}{c}\gamma_1,\alpha_1, \beta_1 ; \gamma_2, \alpha_2 ; \\\delta_1-\alpha, \alpha_3, \beta_2 ;\delta_2, \alpha_4; \delta_3, \beta_3 ; \end{array} \left|\begin{array}{c}\omega_1 t^{\alpha_3} \\\omega_2 t^{\beta_2} \end{array}\right.\right).
    \end{equation}
    \item  Let $\delta_1>1$. Then
    \begin{equation*}
    D_t^\alpha \bigg[\int_0^t \eta^{\delta_1-1} E_2\left(\begin{array}{c}
\gamma_1,\alpha_1, \beta_1 ; \gamma_2, \alpha_2 ; \\
\delta_1, \alpha_3, \beta_2 ;\delta_2, \alpha_4; \delta_3, \beta_3 ; 
\end{array} \left| 
\begin{array}{c}
\omega_1 \eta^{\alpha_3} \\
\omega_2 \eta^{\beta_2} 
\end{array} 
\right.
\right)f(t-\eta) d\eta \bigg] 
\end{equation*}
\begin{equation}\label{jumaeva:eq:10}
=\int_0^t \eta^{\delta_1-\alpha-1} E_2\left(\begin{array}{c}
\gamma_1,\alpha_1, \beta_1 ; \gamma_2, \alpha_2 ; \\
\delta_1-\alpha, \alpha_3, \beta_2 ;\delta_2, \alpha_4; \delta_3, \beta_3 ; 
\end{array} \left| 
\begin{array}{c}
\omega_1 \eta^{\alpha_3} \\
\omega_2 \eta^{\beta_2} 
\end{array} 
\right.
\right) f(t-\eta) d\eta.
\end{equation}
\end{enumerate}\label{jumaeva:lem:5} 
\end{lem}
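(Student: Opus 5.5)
The three identities will be checked termwise on the double series of Definition~\ref{jumaeva:def:3}, using only the power-function formulas of fractional calculus. Lemma~\ref{jumaeva:lem:3} guarantees absolute and locally uniform convergence of the series under \eqref{jumaeva:eq:6}, so interchanging sums with integrals and derivatives should be legitimate on compact subsets.

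\textbf{Part (1).} Substituting $x=\omega_1\eta^{\alpha_3}$ and $y=\omega_2\eta^{\beta_2}$ gives the general term
\[
\frac{(\gamma_1)_{\alpha_1 m+\beta_1 n}(\gamma_2)_{\alpha_2 m}\,\omega_1^m\omega_2^n}{\Gamma(\delta_1+\alpha_3 m+\beta_2 n)\Gamma(\delta_2+\alpha_4 m)\Gamma(\delta_3+\beta_3 n)}\,\eta^{\delta_1+\alpha_3 m+\beta_2 n-1}.
\]
Integrating from $0$ to $t$ produces $t^{\delta_1+\alpha_3m+\beta_2n}/(\delta_1+\alpha_3m+\beta_2n)$. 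The identity $s\Gamma(s)=\Gamma(s+1)$ then replaces $\Gamma(\delta_1+\alpha_3m+\beta_2n)$ by $\Gamma(\delta_1+1+\alpha_3m+\beta_2n)$, which is exactly \eqref{jumaeva:eq:8}. This needs $\Re\delta_1>0$ for integrability at $0$. The interchange of sum and integral follows from uniform convergence of $\sum|\cdot|$ on $[0,t]$, after factoring out $\eta^{\delta_1-1}$.

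\textbf{Part (2).} Here I would use the Caputo power rule
\[
D_t^\alpha t^{\nu}=\frac{\Gamma(\nu+1)}{\Gamma(\nu+1-\alpha)}\,t^{\nu-\alpha},\qquad \nu>0,
\]
together with $D_t^\alpha 1=0$. Apply it with $\nu=\delta_1-1+\alpha_3m+\beta_2n$. The factor $\Gamma(\nu+1)=\Gamma(\delta_1+\alpha_3m+\beta_2n)$ cancels the denominator and leaves $\Gamma(\delta_1-\alpha+\alpha_3m+\beta_2n)$, which gives \eqref{jumaeva:eq:9}. The rule requires $\nu>0$ for every $(m,n)$, in particular $\delta_1>1$ for the $(0,0)$ term. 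When the $(0,0)$ term is a constant ($\delta_1=1$), its Caputo derivative vanishes rather than matching the stated formula, so the identity should be read under $\delta_1>1$, or as a Riemann–Liouville identity. I will state this caveat explicitly. The termwise Caputo differentiation is justified because the differentiated series $\sum \nu c_{mn}t^{\nu-1}$ converges locally uniformly on $(0,T]$ with an integrable majorant near $0$, so $I^{1-\alpha}$ commutes with the sum.

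\textbf{Part (3).} Write the convolution as $(g*f)(t)$, where $g(\eta)=\eta^{\delta_1-1}E_2(\cdot)$. Since $\delta_1>1$, we have $g(0)=0$ and $g$ is absolutely continuous. Hence
\[
\frac{d}{dt}(g*f)=g'*f+g(0)f(t)=g'*f,
\]
and therefore
\[
D_t^\alpha(g*f)=I^{1-\alpha}(g'*f)=(I^{1-\alpha}g')*f=(D^\alpha g)*f,
\]
by associativity of convolution. Part (2) then identifies $D^\alpha g$, yielding \eqref{jumaeva:eq:10}.

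The main obstacle is rigor at $t=0$: justifying termwise differentiation and the Fubini/associativity step when $g'$ has an integrable singularity $\eta^{\delta_1-2}$. I would handle this by dominated convergence, using the absolute convergence of the majorant series from Lemma~\ref{jumaeva:lem:3}, and by assuming $f$ is continuous.
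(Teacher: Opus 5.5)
Your proposal is correct and takes essentially the paper's route. Parts (1)–(2) are done by termwise integration and the Caputo power rule. For (3), you differentiate under the integral (the boundary term vanishes because $\delta_1>1$), swap the order of integration, and evaluate the resulting Beta integral termwise; your step $I^{1-\alpha}(g'*f)=(I^{1-\alpha}g')*f$ followed by Part (2) is a cleaner packaging of exactly that computation.

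Your caveat is also right. For the Caputo derivative, (2) requires $\delta_1>1$: at $\delta_1=1$ the constant $(0,0)$ term is annihilated, and the stated formula is the Riemann--Liouville one. The paper's statement omits this restriction, which matters because it later applies (2) with $\delta_1=1$ in the estimate for $D_t^\beta U_k$.
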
 

\begin{proof}
One may simply obtain the desired equalities \eqref{jumaeva:eq:8} and \eqref{jumaeva:eq:9}  using Definition \ref{jumaeva:def:3} and the definite integral of the power function. For proving \eqref{jumaeva:eq:10}, we initially use the definition of the Caputo fractional derivative:
\begin{align*}
   D_t^\alpha\big[F(t)\big]&=\frac{1}{\Gamma(1-\alpha)}\int_0^t \frac{F'(s)}{(t-s)^\alpha} ds= \\
   &=\frac{1}{\Gamma(1-\alpha)}\int_0^t \int_0^s \frac{f(\eta)}{(t-s)^\alpha}\frac{\partial}{\partial s}\bigg[(s-\eta)^{\delta_1-1 }E_2\left(\begin{array}{c}
\gamma_1,\alpha_1, \beta_1 ; \gamma_2, \alpha_2 ; \\
\delta_1, \alpha_3, \beta_2 ;\delta_2, \alpha_4; \delta_3, \beta_3 ; 
\end{array} \left| 
\begin{array}{c}
\omega_1 (s-\eta)^{\alpha_3} \\
\omega_2 (s-\eta)^{\beta_2} 
\end{array} 
\right.
\right)\bigg]d\eta ds,
\end{align*}
where $F(t)=\int_0^t (t-\eta)^{\delta_1-1} E_2\left(\begin{array}{c}
\gamma_1,\alpha_1, \beta_1 ; \gamma_2, \alpha_2 ; \\
\delta_1, \alpha_3, \beta_2 ;\delta_2, \alpha_4; \delta_3, \beta_3 ; 
\end{array} \left| 
\begin{array}{c}
\omega_1 (t-\eta)^{\alpha_3} \\
\omega_2 (t-\eta)^{\beta_2} 
\end{array} 
\right.
\right)f(\eta) d\eta. $

Using the term-wise differentiation rule for $E_2(\cdot,\cdot)$ and then swapping the order of integration yields for $\delta_1>1$:
\begin{equation*}
     D_t^\alpha\big[F(t)\big]=\frac{1}{\Gamma(1-\alpha)}\int_0^t \int_0^s \frac{f(\eta)}{(t-s)^\alpha}(s-\eta)^{\delta_1-2 }E_2\left(\begin{array}{c}
\gamma_1,\alpha_1, \beta_1 ; \gamma_2, \alpha_2 ; \\
\delta_1-1, \alpha_3, \beta_2 ;\delta_2, \alpha_4; \delta_3, \beta_3 ; 
\end{array} \left| 
\begin{array}{c}
\omega_1 (s-\eta)^{\alpha_3} \\
\omega_2 (s-\eta)^{\beta_2} 
\end{array} 
\right.
\right)d\eta ds
\end{equation*}
\begin{equation*}
 =\frac{1}{\Gamma(1-\alpha)}\int_0^t f(\eta) \int_\eta^t \frac{(s-\eta)^{\delta_1-2}}{(t-s)^\alpha} E_2\left(\begin{array}{c}
\gamma_1,\alpha_1, \beta_1 ; \gamma_2, \alpha_2 ; \\
\delta_1-1, \alpha_3, \beta_2 ;\delta_2, \alpha_4; \delta_3, \beta_3 ; 
\end{array} \left| 
\begin{array}{c}
\omega_1 (s-\eta)^{\alpha_3} \\
\omega_2 (s-\eta)^{\beta_2} 
\end{array} 
\right.
\right)ds d\eta. 
\end{equation*}
Now, we expand  the function $E_2(\cdot,\cdot)$ into its defining double series and integrate termwise. With
$x=\frac{s-\eta}{t-\eta}$, the inner integral becomes a Beta integral and produces the factor
$\Gamma(\alpha_3 m+\beta_2 k+\delta_1-1)/\Gamma(\alpha_3 m+\beta_2 k+\delta_1-\alpha)$:
\begin{equation*}
    D_t^\alpha\big[F(t)\big]=\frac{1}{\Gamma(1-\alpha)} \int_0^t f(\eta) \sum_{m=0}^\infty \sum_{k=0}^\infty \frac{\Gamma(\alpha_1 m+\beta_1 k+\gamma_1)\Gamma(\alpha_2 m+\gamma_2) \omega_1^m \omega_2^k}{\Gamma(\gamma_1)\Gamma(\gamma_2)\Gamma(\alpha_3 m+\beta_2 k+\delta_1-1)\Gamma(\alpha_4 m+\delta_2)\Gamma(\beta_3 k+ \delta_3)}
\end{equation*}
\begin{equation*}
    \times\int_0^1 (t-\eta)^{\delta_1-2} x^{\delta_1-2} (t-\eta)^{-\alpha}(1-x)^{-\alpha}(t-\eta)^{\alpha_3 m+\beta_2 k}x^{\alpha_3 m+\beta_2 k} (t-\eta) dx d\eta
\end{equation*}

\begin{equation*}
    =\frac{1}{\Gamma(1-\alpha)} \int_0^t f(\eta) \sum_{m=0}^\infty \sum_{k=0}^\infty \frac{\Gamma(\alpha_1 m+\beta_1 k+\gamma_1)\Gamma(\alpha_2 m+\gamma_2) (\omega_1(t-\eta)^{\alpha_3})^m (\omega_2(t-\eta)^{\beta_2})^k}{\Gamma(\gamma_1)\Gamma(\gamma_2)\Gamma(\alpha_3 m+\beta_2 k+\delta_1-\alpha)\Gamma(\alpha_4 m+\delta_2)\Gamma(\beta_3 k+ \delta_3)}
\end{equation*}
\begin{equation*}
\times    (t-\eta)^{\delta_1-\alpha-1} d\eta
\end{equation*}
Hence, we get 
\begin{equation*}
   D_t^\alpha\big[F(t)\big]=\int_0^t (t-\eta)^{\delta_1-\alpha-1} E_2\left(\begin{array}{c}
\gamma_1,\alpha_1, \beta_1 ; \gamma_2, \alpha_2 ; \\
\delta_1-\alpha, \alpha_3, \beta_2 ;\delta_2, \alpha_4; \delta_3, \beta_3 ; 
\end{array} \left| 
\begin{array}{c}
\omega_1 (t-\eta)^{\alpha_3} \\
\omega_2 (t-\eta)^{\beta_2} 
\end{array} 
\right.
\right) f(\eta) d\eta.
\end{equation*}
which is the claimed relation \eqref{jumaeva:eq:10}.
\end{proof}

 \section{Supplementary Cauchy Problem for  Ordinary Differential Equation}\label{jumaeva:sec:3}
 In this section, we examine an auxiliary initial value problem for an ordinary differential equation involving sequential fractional derivatives, which serves as support for our main problem.
\begin{lem}
Let $f(t)\in C[0,T]$, $\psi,\varphi$ are known numbers, $0< \alpha<1$, $0<\beta<1$ and $\lambda \in \mathbb{C}$. Then the unique solution of the following forward problem
\begin{equation}\label{jumaeva:eq:11}
    \begin{cases}
        D_t^\beta(D_t^\alpha y(t))+D_t^\beta y(t)+\lambda y(t)=f(t),\quad 0<t<T,\\
        y(+0)=\varphi,\\
        D_t^\alpha y(+0)=\psi,
    \end{cases}
\end{equation} has the form
\begin{align}\label{jumaeva:eq:12}
y(t)&= \varphi E_2\left(\begin{array}{c}1, 1, 1; 1, 0; \\1, \alpha+\beta, \alpha ; 1, 1; 1, 1; \end{array}\left| \begin{array}{c}-\lambda t^{\alpha+\beta} \\-t^\alpha\end{array}\right.\right) \nonumber \\
&+(\varphi+\psi)t^\alpha E_2\left(\begin{array}{c}1, 1, 1; 1, 0; \\\alpha+1, \alpha+\beta, \alpha ; 1, 1; 1, 1; \end{array}\left|\begin{array}{c}-\lambda t^{\alpha+\beta} \\-t^\alpha \end{array}\right.\right) \nonumber \\
&+\int_0^t \tau^{\alpha+\beta-1} E_2\left(\begin{array}{c}1, 1, 1; 1, 0; \\\alpha+\beta, \alpha+\beta, \alpha ; 1, 1; 1, 1; \end{array}\left| \begin{array}{c}-\lambda \tau^{\alpha+\beta} \\-\tau^\alpha \end{array}\right.\right) f(t-\tau) d\tau .
\end{align}\label{jumaeva:lem:6}
\end{lem}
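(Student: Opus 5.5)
The plan is to derive formula \eqref{jumaeva:eq:12} with the Laplace transform, and then to justify it by direct substitution using Lemma \ref{jumaeva:lem:5}; uniqueness comes from a Volterra reformulation. Write $Y(s)=\mathcal{L}[y](s)$, $F(s)=\mathcal{L}[f](s)$, and $z(t)=D_t^\alpha y(t)$, so that $z(+0)=\psi$. The Caputo rule $\mathcal{L}[D_t^\gamma g]=s^\gamma G(s)-s^{\gamma-1}g(0)$ gives
\begin{equation*}
\mathcal{L}[D_t^\beta z]=s^\beta\bigl(s^\alpha Y-s^{\alpha-1}\varphi\bigr)-s^{\beta-1}\psi,\qquad \mathcal{L}[D_t^\beta y]=s^\beta Y-s^{\beta-1}\varphi .
\end{equation*}
Substituting into \eqref{jumaeva:eq:11} yields
\begin{equation*}
Y(s)\bigl(s^{\alpha+\beta}+s^\beta+\lambda\bigr)=F(s)+s^{\alpha+\beta-1}\varphi+s^{\beta-1}(\varphi+\psi).
\end{equation*}

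Next, for large $|s|$ I expand
\begin{equation*}
\frac{1}{s^{\alpha+\beta}+s^\beta+\lambda}=\frac{s^{-\alpha-\beta}}{1+s^{-\alpha}+\lambda s^{-\alpha-\beta}}=\sum_{m,n\ge0}\frac{(m+n)!}{m!\,n!}(-\lambda)^m(-1)^n s^{-(\alpha+\beta)(m+1)-\alpha n}.
\end{equation*}
I then invert term by term with $\mathcal{L}^{-1}[s^{-\nu}]=t^{\nu-1}/\Gamma(\nu)$. The three right-hand terms become:
\begin{itemize}
\item the $\varphi$ term, with powers $s^{-1-(\alpha+\beta)m-\alpha n}$, giving $t^{(\alpha+\beta)m+\alpha n}/\Gamma(1+(\alpha+\beta)m+\alpha n)$;
\item the $(\varphi+\psi)$ term, with powers $s^{-1-\alpha-(\alpha+\beta)m-\alpha n}$, giving $t^{\alpha}$ times the same structure with $1$ replaced by $\alpha+1$;
\item the $F$ term, which becomes a convolution with kernel $\tau^{\alpha+\beta-1}\sum(\cdots)\tau^{(\alpha+\beta)m+\alpha n}/\Gamma(\alpha+\beta+(\alpha+\beta)m+\alpha n)$.
\end{itemize}
Comparing with Definition \ref{jumaeva:def:3}, take $\gamma_1=\alpha_1=\beta_1=1$ so that $(1)_{m+n}=(m+n)!$, and $\gamma_2=1,\alpha_2=0$ so that $(1)_0=1$. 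The denominators $\Gamma(\delta_2+m)\Gamma(\delta_3+n)=m!\,n!$ with $\delta_2=\delta_3=1$, and $\alpha_3=\alpha+\beta$, $\beta_2=\alpha$. Then the three series are exactly the $E_2$ functions in \eqref{jumaeva:eq:12} with $\delta_1=1,\ \alpha+1,\ \alpha+\beta$ and arguments $(-\lambda t^{\alpha+\beta},-t^\alpha)$.

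Lemma \ref{jumaeva:lem:3} applies here, since $\Delta_1=\alpha+\beta+1-1>0$ and $\Delta_2=\alpha+1-1>0$. So these are entire in both variables, and the double series converge absolutely and locally uniformly. This justifies the termwise operations, or alternatively lets one bypass the Laplace transform entirely.

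To make the existence part rigorous without growth assumptions on $y$, I would verify directly that \eqref{jumaeva:eq:12} solves \eqref{jumaeva:eq:11}. Apply $D_t^\alpha$ to each term using \eqref{jumaeva:eq:9}, and \eqref{jumaeva:eq:10} for the convolution; the latter needs $\delta_1=\alpha+\beta>1$, and otherwise one first writes the convolution as $I^{\alpha+\beta}$ of a series. Then apply $D_t^\beta$ in the same way. Termwise, the relations $D^\gamma t^{\nu}/\Gamma(\nu+1)=t^{\nu-\gamma}/\Gamma(\nu+1-\gamma)$ together with the Pascal identity $\binom{m+n}{m}=\binom{m+n-1}{m-1}+\binom{m+n-1}{m}$ make the three operators telescope to $f$. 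The initial values follow by setting $t=0$ in the series: the $\varphi$ term gives $\varphi$, and $D^\alpha$ of the $t^\alpha$ term gives $\varphi+\psi$ at $0$. The $D^\alpha$ of the $\varphi$ term contributes $-\varphi$ at $t=0$ through its $n=1,m=0$ term, so $D_t^\alpha y(+0)=\psi$.

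Uniqueness is by linearity: the difference $w$ of two solutions solves the homogeneous problem with zero data. Applying $I^\beta$ and then $I^\alpha$ converts it to $w=-I^{\alpha}w-\lambda I^{\alpha+\beta}w$, a Volterra equation with weakly singular kernel. A Gronwall-type iteration, or the contraction of $I^\alpha$ in a weighted sup norm, then gives $w\equiv0$.

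I expect the main obstacle to be the direct verification, specifically handling the convolution term when $\alpha+\beta\le1$, where \eqref{jumaeva:eq:10} is not directly applicable, and controlling the boundary terms produced by the Caputo derivatives at $t=0$. I would first try to treat that case by writing the kernel series as fractional integrals of $f$ and differentiating termwise.
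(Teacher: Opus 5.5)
Your derivation of \eqref{jumaeva:eq:12} is essentially the paper's. Both use the Laplace transform with the Caputo rule, arrive at the same $\hat y(s)$, expand $1/(s^{\alpha+\beta}+s^\beta+\lambda)$ geometrically, and identify the same $E_2$ parameters. The only difference is the expansion. You expand at once in $s^{-\alpha}$ and $\lambda s^{-\alpha-\beta}$ and invert powers of $s$. The paper instead expands in $s^{\beta}/(s^{\alpha+\beta}+\lambda)$, inverts through the Prabhakar transform (Lemma~\ref{jumaeva:lem:2}) and then re-expands. Your route is shorter and gives $\binom{m+n}{m}$ directly. Your uniqueness argument via $w=-I^\alpha w-\lambda I^{\alpha+\beta}w$ is correct, and it supplies something the paper only asserts.

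The gap is in the direct verification, and it is not where you locate it.

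\emph{First point.} Both \eqref{jumaeva:eq:9} and your termwise rule $D^\gamma t^\nu/\Gamma(\nu+1)=t^{\nu-\gamma}/\Gamma(\nu+1-\gamma)$ are Riemann--Liouville rules. For the Caputo derivative they fail at $\nu=0$ (i.e.\ $\delta_1=1$), where a constant must be sent to $0$. Every constant that appears must be annihilated: the one in the $\varphi$-term, the constant $-\varphi$ inside its $D_t^\alpha$, and $D_t^\alpha$ of the $t^\alpha$-term. This is precisely what makes the Pascal telescoping close and gives $D_t^\alpha y(+0)=\psi$. Used literally, \eqref{jumaeva:eq:9} would give $D_t^\alpha y\sim\varphi t^{-\alpha}/\Gamma(1-\alpha)$ as $t\to0$.

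\emph{Second and more serious point.} The outer $D_t^\beta$ on the convolution term $y_3$ breaks down for every $\alpha,\beta$, not only when $\alpha+\beta\le1$. After one step the kernel is $\eta^{\beta-1}E_2(\beta,\dots)$, so \eqref{jumaeva:eq:10}, which needs $\delta_1>1$, cannot be reused. Termwise,
$D_t^\alpha y_3=\sum_{m,n}\binom{m+n}{m}(-\lambda)^m(-1)^n I^{\beta+(\alpha+\beta)m+\alpha n}f$, whose leading term is $I^\beta f$. For merely continuous $f$, $I^\beta f$ need not be absolutely continuous. Otherwise the closed graph theorem would give $\mathrm{Var}(I^\beta f)\le C\|f\|_\infty$, which fails for $f=\cos\omega t$ as $\omega\to\infty$. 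Then $I^{1-\beta}\frac{d}{dt}(I^\beta f)$ is undefined, and your fallback of differentiating termwise fails at exactly that term.

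To close the argument you have two options.
\begin{enumerate}
\item Read each Caputo derivative as $\frac{d}{dt}I^{1-\gamma}\bigl(g-g(+0)\bigr)$, which agrees with $I^{1-\gamma}\frac{d}{dt}g$ whenever the latter exists. Then $\frac{d}{dt}I^{1-\beta}I^{\beta+\nu}f=I^{\nu}f$, the $(0,0)$ term gives $f$, and everything else cancels against $D_t^\beta y_3+\lambda y_3$ by Pascal. No condition on $\alpha+\beta$ is needed.
\item Strengthen the hypothesis, e.g.\ to $f$ absolutely continuous, so that every $I^\nu f$ is absolutely continuous.
\end{enumerate}
The paper's formal Laplace argument never confronts this, but your plan as written would not get past it.
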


\begin{proof}
We use the Laplace transform to demonstrate the lemma. Recall that the Laplace transform of a function $f(t)$ is defined as follows(see \cite{jumaeva:ref:33}):
\begin{equation*}
\mathcal{L}[f](s) = \hat{f}(s) = \int_0^{\infty} e^{-st} f(t)\,dt.
\end{equation*}

The inverse Laplace transform is defined by
\begin{equation*}
\mathcal{L}^{-1}[\hat{f}](t) = \frac{1}{2\pi i} \int_C e^{st} \hat{f}(s)\,ds,
\end{equation*}
where $C$ is a contour parallel to the imaginary axis and to the right of the singularities of $\hat{f} $.

If $0<\rho \leq 1$, then the Laplace transform of the Caputo fractional derivative is given by:
\begin{equation}
\mathcal{L} [D^{\alpha}_{t} f](t)=s^{\alpha} \mathcal{L} [f](t)-s^{\alpha-1} f(0).\label{jumaeva:eq:13}
\end{equation}
Let us apply the Laplace transform to equation  \eqref{jumaeva:eq:11}. Then, from the properties of the Laplace transform \eqref{jumaeva:eq:13}, equation \eqref{jumaeva:eq:11} becomes 
\begin{equation*}
s^{\alpha+\beta}\hat{y}(s)-s^{\alpha+\beta-1}y(0)-s^{\beta-1}D_t^\alpha y(0)+s^\beta\hat{y}(s)-s^{\beta-1}y(0)+\lambda \hat{y}(s)=\hat{f}(s),
\end{equation*}
by using initial conditions, it follows from this
\begin{equation*}
\hat{y}(s)=\frac{\hat{f}(s)+\varphi s^{\beta-1}(s^\alpha+1)+\psi s^{\beta-1}}{s^{\alpha+\beta}+s^\beta+\lambda}.
\end{equation*}
Let us decompose the function $\hat{y}(s)$ into three functions:
\begin{equation*}
\hat{y}(s)=\hat{y}_0(s)+\hat{y}_1(s)+\hat{y}_2(s),
\end{equation*}
where
\begin{equation*}
\hat{y}_0(s)=\frac{\hat{f}(s)}{s^{\alpha+\beta}+s^\beta+\lambda},\quad \hat{y}_1(s)=\frac{\varphi s^{\beta-1}(s^\alpha+1)}{s^{\alpha+\beta}+s^\beta+\lambda}, \quad
\hat{y}_2(s)=\frac{\psi s^{\beta-1}}{s^{\alpha+\beta}+s^\beta+\lambda}.
\end{equation*}
 Furthermore, 
 \begin{equation}
 y(t)=\mathcal{L}^{-1}[\hat{y}_0(s)]+\mathcal{L}^{-1}[\hat{y}_1(s)]+\mathcal{L}^{-1}[\hat{y}_2(s)].\label{jumaeva:eq:14}
 \end{equation}
For $\hat{y}_0(s)$, one can obtain the inverse by splitting the function
into simpler functions:
\begin{equation*}
\mathcal{L}^{-1}[\hat{y}_0(s)]=\mathcal{L}^{-1}\bigg[\frac{\hat{f}(s)}{s^{\alpha+\beta}+s^\beta+\lambda}\bigg]=\mathcal{L}^{-1}\bigg[\frac{1}{s^{\alpha+\beta}+s^\alpha+\lambda}\bigg]*\mathcal{L}^{-1}[\hat{f}(s)].
\end{equation*}
For $s\in\mathbb{C}$ and $\bigg|\frac{-s^\beta}{s^{\alpha+\beta}+\lambda}\bigg|<1$, we have 
\begin{equation}
\frac{1}{s^{\alpha+\beta}+s^\beta+\lambda}=\frac{1}{(s^{\alpha+\beta}+\lambda)(1-\frac{-s^\beta}{s^{\alpha+\beta}+\lambda})}=\sum_{n=0}^\infty\frac{(-1)^n s^{\beta n}}{(s^{\alpha+\beta}+\lambda)^{n+1}},\label{jumaeva:eq:16}
\end{equation}
and hence we have the following relation:
\begin{equation*}
\mathcal{L}^{-1}\bigg[\frac{1}{s^{\alpha+\beta}+s^\beta+\lambda}\bigg]=\mathcal{L}^{-1}\bigg[\sum_{n=0}^\infty\frac{(-1)^n s^{\beta n}}{(s^{\alpha+\beta}+\lambda)^{n+1}}\bigg]=\sum_{n=0}^\infty (-1)^n \mathcal{L}^{-1}\bigg[\frac{ s^{\beta n}}{(s^{\alpha+\beta}+\lambda)^{n+1}}\bigg].
\end{equation*}
 According to Lemma \ref{jumaeva:lem:2}  and the convolution of functions, we obtain 
 \begin{equation*}
 \mathcal{L}^{-1}[\hat{y}_0(s)]=\int_0^t\sum_{n=0}^\infty (-1)^n \tau^{\alpha(n+1)+\beta-1}E_{\alpha+\beta,\alpha(n+1)+\beta}^{n+1}(-\lambda\tau^{\alpha+\beta})f(t-\tau)d\tau.
 \end{equation*}
 Now we transform the Prabhakar function to the bivariate Mittag-Leffler function $E_2(x,y)$. By the definition of the Prabhakar function, we have
 \begin{equation*}
 \sum_{n=0}^\infty (-1)^n \tau^{\alpha(n+1)+\beta-1}E_{\alpha+\beta,\alpha(n+1)+\beta}^{n+1}(-\lambda\tau^{\alpha+\beta})
 \end{equation*}
\begin{equation}
 =\sum_{n=0}^\infty \sum_{j=0}^\infty
\frac{(-\tau^\alpha)^n(-\lambda \tau^{\alpha+\beta})^j \Gamma(n+1+j)}{\Gamma(n+1)\Gamma(j+1)\Gamma((\alpha+\beta)j+\alpha(n+1)+\beta)}, \label{jumaeva:eq:17}
\end{equation}
one can easily use the definition of the bivariate Mittag-Leffler function $E_2(x,y)$  for \eqref{jumaeva:eq:17}  with particular case 
$\gamma_1=\alpha_1=\beta_1=\gamma_2=\delta_2=\alpha_4=\delta_3=\beta_3=1$, $\alpha_2=0$,  
$\delta_1=\alpha+\beta$,  $\alpha_3=\alpha+\beta$,  $\beta_2=\alpha$.

Then, we have
\begin{equation}
\mathcal{L}^{-1}[\hat{y}_0(s)]=\int_0^t \tau^{\alpha+\beta-1} E_2\left( 
\begin{array}{c}
1, 1, 1; 1, 0; \\
\alpha+\beta, \alpha+\beta, \alpha ; 1, 1; 1, 1; 
\end{array}
\left| 
\begin{array}{c}
-\lambda \tau^{\alpha+\beta} \\
- \tau^\alpha
\end{array}
\right.
\right) y(\tau) d\tau.\label{jumaeva:eq:18}
\end{equation}
 Now,  we apply the inverse Laplace transform for $\hat{y}_1(s)$ and $\hat{y}_2(s)$. As in the work \cite{jumaeva:ref:25}, by expressing the denominator in the form of a series as in \eqref{jumaeva:eq:16}, and applying the Laplace transform of the Prabhakar function, we arrive at the following expression:
 \begin{equation*}
     \mathcal{L}^{-1}[\hat{y}_1(s)]=\varphi \sum_{n=0}^\infty (-1)^n \bigg[t^{\alpha n}E_{\alpha+\beta,\alpha n+1}^{n+1}(-\lambda t^{\alpha+\beta})+t^{\alpha n+\alpha} E_{\alpha+\beta,\alpha n+\alpha+1}^{n+1}(-\lambda t^{\alpha+\beta})\bigg],
\end{equation*}
 \begin{equation*}
     \mathcal{L}^{-1}[\hat{y}_2(s)]=\psi \sum_{n=0}^\infty (-1)^n t^{\alpha n+\alpha}E_{\alpha+\beta,\alpha n+\alpha +1}^{n+1}(-\lambda t^{\alpha+\beta}).
\end{equation*}
 In order to transform into the bivariate Mittag-Leffler function, we express it as a series expansion as in (11) and proceed by utilizing its definition. So, we obtain the following representations:
 \begin{align}
    \mathcal{L}^{-1}[\hat{y}_1(s)]=&\varphi\bigg[ E_2\left( 
\begin{array}{c}
1, 1, 1; 1, 0; \\
1, \alpha+\beta, \alpha ; 1, 1; 1, 1; 
\end{array}
\left| 
\begin{array}{c}
-\lambda t^{\alpha+\beta} \\
- t^\alpha
\end{array}
\right.
\right)\nonumber \\
&+t^\alpha E_2\left( 
\begin{array}{c}
1, 1, 1; 1, 0; \\
\alpha+1, \alpha+\beta, \alpha ; 1, 1; 1, 1; 
\end{array}
\left| 
\begin{array}{c}
-\lambda t^{\alpha+\beta} \\
- t^\alpha
\end{array}
\right.
\right) \bigg],\label{jumaeva:eq:19}
 \end{align}
 and 
 \begin{equation}
     \mathcal{L}^{-1}[\hat{y}_2(s)]=\psi t^\alpha E_2\left( \begin{array}{c}1, 1, 1; 1, 0; \\\alpha+1, \alpha+\beta, \alpha ; 1, 1; 1, 1; \end{array}\left| \begin{array}{c}-\lambda t^{\alpha+\beta} \\-t^\alpha \end{array}\right.\right).\label{jumaeva:eq:20} 
\end{equation}
Combining \eqref{jumaeva:eq:18}, \eqref{jumaeva:eq:19} and \eqref{jumaeva:eq:20} leads us to the equation \eqref{jumaeva:eq:14}, we obtain the form of solution $y(t)$ that is defined in \eqref{jumaeva:eq:12}.
\end{proof}

\section{Statement and Proof of the Main Theorem}\label{jumaeva:sec:4}

In this section, we provide the statement and proof of the main theorem concerning the existence and uniqueness of the solution to the problem \eqref{jumaeva:eq:1}-\eqref{jumaeva:eq:4}. 

In accordance with the Fourier method, we seek the solution to the problem $u(x,t) $ and the given function $f(x,t) $ as follows
\begin{equation}
    u(x,t) = \sum_{k=1}^{\infty} U_k(t) \sin k\pi x,\label{jumaeva:eq:21}
\end{equation}
\begin{equation}
    f(x,t) = \sum_{k=1}^{\infty} f_k(t) \sin k\pi x,\label{jumaeva:eq:22}
\end{equation}
where $U_n(t) $ are the unknowns to be found and $f_k(t) $ are the Fourier coefficients of the function $f(x,t) $, given as
\begin{equation*}
f_k(t) = 2 \int_0^1 f(x,t) \sin k\pi x \, dx.
\end{equation*}

Substituting \eqref{jumaeva:eq:21} and \eqref{jumaeva:eq:22} into \eqref{jumaeva:eq:1} and considering the initial condition, we obtain the following Cauchy problem:
\begin{equation*}
\left\{
\begin{aligned}
    &D_t^\beta(D_t^\alpha U_k(t)) + D_t^\beta U_k(t)+ (k\pi)^2 U_k(t) = f_k(t), \\
    &U_k(+0) = \varphi_k,\\
    &D_t^\alpha U_k(+0)=\psi_k,
\end{aligned}
\right.
\end{equation*}
where $\varphi_k$,$ \psi_k $ are the Fourier coefficients of the given functions $\varphi(x)$ and  $\psi(x)$, respectively, which are defined as follows:
\begin{equation*}
\varphi_k = 2 \int_0^1 \varphi(x) \sin k\pi x \, dx, \quad \psi_k = 2 \int_0^1 \psi(x) \sin k\pi x \, dx.
\end{equation*}

Based on Lemma \ref{jumaeva:lem:6}, we explicitly find $U_k(t)$ the following:
\begin{align}
U_k(t)=&\varphi_k E_2\left( 
\begin{array}{c}
1, 1, 1; 1, 0; \\
1, \alpha+\beta, \alpha ; 1, 1; 1, 1; 
\end{array}
\left| 
\begin{array}{c}
-(\pi k)^2 t^{\alpha+\beta} \\
-t^\alpha 
\end{array}\right.\right) \nonumber\\
&+(\varphi_k+\psi_k)t^\alpha E_2\left( 
\begin{array}{c}
1, 1, 1; 1, 0; \\
\alpha+1, \alpha+\beta, \alpha ; 1, 1; 1, 1; 
\end{array}
\left| 
\begin{array}{c}
-(\pi k)^2 t^{\alpha+\beta} \\
-t^\alpha 
\end{array}\right.\right) \nonumber \\
& +\int_0^t (t-\tau)^{\alpha+\beta-1} E_2\left( 
\begin{array}{c}
1, 1, 1; 1, 0; \\
\alpha+\beta, \alpha+\beta, \alpha ; 1, 1; 1, 1; 
\end{array}
\left| 
\begin{array}{c}
-(\pi k)^2 (t-\tau)^{\alpha+\beta} \\
-(t-\tau)^\alpha 
\end{array}
\right.
\right) f_k(\tau) d\tau.\label{jumaeva:eq:23}
\end{align}
Therefore, we determine the formal solution of the problem in the following form based on the equalities \eqref{jumaeva:eq:21} and \eqref{jumaeva:eq:23}:
\begin{align}
    u(x,t)=&\sum_{k=1}^\infty\Bigg[\varphi_k E_2\left( \begin{array}{c}1, 1, 1; 1, 0; \\1, \alpha+\beta, \alpha ; 1, 1; 1, 1; \end{array}\left| \begin{array}{c}-(\pi k)^2 t^{\alpha+\beta} \\-t^\alpha \end{array}\right.\right) \nonumber \\
  &+(\varphi_k+\psi_k)t^\alpha E_2\left( \begin{array}{c}1, 1, 1; 1, 0; \\\alpha+1, \alpha+\beta, \alpha ; 1, 1; 1, 1; \end{array}\left| \begin{array}{c}-(\pi k)^2 t^{\alpha+\beta} \\-t^\alpha\end{array}\right.\right) \nonumber \\
& +\int_0^t (t-\tau)^{\alpha+\beta-1} E_2\left( \begin{array}{c}1, 1, 1; 1, 0; \\\alpha+\beta, \alpha+\beta, \alpha ; 1, 1; 1, 1;\end{array}\left| \begin{array}{c} -(\pi k)^2 (t-\tau)^{\alpha+\beta}\\-(t-\tau)^\alpha\end{array}\right.\right) f_k(\tau) d\tau \Bigg]\sin{\pi k x}.\label{jumaeva:eq:24}
\end{align}
We proceed to prove that the infinite series of the formal solution \eqref{jumaeva:eq:24} is uniformly convergent on 
$\bar\Omega=[0,1]\times[0,T]$. We start with
\begin{equation}
|u(x,t)|=\left|\sum_{k=1}^\infty U_k(t)\sin(\pi kx)\right|
\le \sum_{k=1}^\infty |U_k(t)|.\label{jumaeva:eq:25}
\end{equation}
To complete this estimate, we use the bounds provided by Lemma \ref{jumaeva:lem:4} and Lemma \ref{jumaeva:lem:5} for $\alpha+\beta>1$:
\begin{equation*}
    |U_k(t)|\le C_1|\varphi_k|+C_2|\psi_k|
\end{equation*}
\begin{equation*}
    +\max_{0\leq t\leq T} |f_k(t)|\int_0^t (t-\tau)^{\alpha+\beta-1} E_2\left( 
\begin{array}{c}
1, 1, 1; 1, 0; \\
\alpha+\beta, \alpha+\beta, \alpha ; 1, 1; 1, 1; 
\end{array}
\left| 
\begin{array}{c}
-(\pi k)^2 (t-\tau)^{\alpha+\beta} \\
-(t-\tau)^\alpha 
\end{array}
\right.
\right)  d\tau
\end{equation*}
\begin{equation}
\leq C \bigg(|\varphi_k| +|\psi_k|+\|f_k(t)\|\bigg).\label{jumaeva:eq:26}
\end{equation}
Hereafter, $\|f_k(t)\|=\max_{0\leq t\leq T}|f_k(t)|$, and  $C$ denotes a generic sufficiently large constant.

Now we take the estimate for $D_t^\beta u(x,t)$:
\begin{equation}
|D_t^\beta u(x,t)|=\bigg|\sum_{k=1}^\infty D_t^\beta U_k(t) \sin{k \pi x}\bigg|\leq\sum_{k=1}^\infty \bigg|D_t^\beta U_k(t)\bigg|.\label{jumaeva:eq:27}
\end{equation}
Due to Lemma \ref{jumaeva:lem:5}, we have for $\alpha+\beta>1$
\begin{align}
|D_t^\beta U_k(t)|=&\bigg|\varphi_k t^{-\beta} E_2\left( \begin{array}{c}1, 1, 1; 1, 0; \\1-\beta, \alpha+\beta, \alpha ; 1, 1; 1, 1; \end{array}\left| \begin{array}{c}-(\pi k)^2 t^{\alpha+\beta} \\-t^\alpha \end{array}\right.\right) \nonumber \\
&+(\varphi_k+\psi_k)t^{\alpha-\beta} E_2\left( \begin{array}{c}1, 1, 1; 1, 0; \\\alpha-\beta+1, \alpha+\beta, \alpha ; 1, 1; 1, 1; \end{array}\left| \begin{array}{c}-(\pi k)^2 t^{\alpha+\beta} \\-t^\alpha\end{array}\right.\right) \nonumber \\
&+\int_0^t (t-\tau)^{\alpha-1} E_2\left( \begin{array}{c}1, 1, 1; 1, 0; \\\alpha, \alpha+\beta, \alpha ; 1, 1; 1, 1;\end{array}\left| \begin{array}{c}-(\pi k)^2 (t-\tau)^{\alpha+\beta} \\-(t-\tau)^\alpha\end{array}\right.\right) f_k(\tau) d\tau \bigg|.\label{jumaeva:eq:28}
\end{align}
Thanks to Lemma \ref{jumaeva:lem:4} and Lemma \ref{jumaeva:lem:5}, we have the following estimate for \eqref{jumaeva:eq:27}:
\begin{align}
 |D_t^\beta U_k(t)|\leq C\bigg(t^{-\beta}|\varphi_k|+t^{-\beta}|\psi_k|+\|f_k(t)\|\bigg), \quad t>0\label{jumaeva:eq:29}
\end{align}
In a similar way to that which we used for $D_t^\beta u(x,t)$, we can obtain the estimate for $D_t^\alpha u(x,t)$  with conditions $\alpha+\beta>1$:
\begin{align}
|D_t^\alpha U_k(t)|=&\bigg|\varphi_k t^{-\alpha} E_2\left( \begin{array}{c}1, 1, 1; 1, 0; \\1-\alpha, \alpha+\beta, \alpha ; 1, 1; 1, 1; \end{array}\left| \begin{array}{c}-(\pi k)^2 t^{\alpha+\beta} \\-t^\alpha \end{array}\right.\right) \nonumber \\
&+(\varphi_k+\psi_k) E_2\left( \begin{array}{c}1, 1, 1; 1, 0; \\1, \alpha+\beta, \alpha ; 1, 1; 1, 1; \end{array}\left| \begin{array}{c}-(\pi k)^2 t^{\alpha+\beta} \\-t^\alpha\end{array}\right.\right) \nonumber \\
&+\int_0^t (t-\tau)^{\beta-1} E_2\left( \begin{array}{c}1, 1, 1; 1, 0; \\\beta, \alpha+\beta, \alpha ; 1, 1; 1, 1;\end{array}\left| \begin{array}{c}-(\pi k)^2 (t-\tau)^{\alpha+\beta} \\-(t-\tau)^\alpha\end{array}\right.\right) f_k(\tau) d\tau \bigg| \nonumber \\
&\leq C\bigg( t^{-\alpha} \varphi_k+ \psi_k+\|f_k(t)\|\bigg), \quad t>0 \label{jumaeva:eq:30}
\end{align}
Next, we establish the estimate for the second spatial derivative $u_{xx}$:
\begin{equation}
|u_{xx}(x,t)| =\left| \sum_{k=1}^\infty (\pi k)^2 U_k(t) \sin(k \pi x) \right| \leq \sum_{k=1}^\infty (\pi k)^2 |U_k(t)|.\label{jumaeva:eq:31}
\end{equation}

 By using the Weierstrass M-test, the uniform convergence of the series \eqref{jumaeva:eq:25},\eqref{jumaeva:eq:27}, \eqref{jumaeva:eq:30}, and \eqref{jumaeva:eq:31} follows from the convergence of the following series
 \begin{equation}
     C\sum_{k=1}^\infty k^2 (|\varphi_k|+|\psi_k|+\|f_k(t)\|).\label{jumaeva:eq:32}
 \end{equation}
 \begin{lem}
     Let $a>\frac12$, $\varphi(x),\psi(x)\in {\overset{\circ}{C}{}^{\,a}_2[0,1]}$ and $f(x,t)\in{\overset{\circ}{C}{}^{\,a}_2(\bar{\Omega})}$. Then the series \eqref{jumaeva:eq:32}  converges uniformly in $\bar{\Omega}$.\label{jumaeva:lem:7}
 \end{lem}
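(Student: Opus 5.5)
The plan is to reduce the series \eqref{jumaeva:eq:32} to sums of the form covered by Lemma \ref{jumaeva:lem:1}. The factor $k^2$ cannot be absorbed by Lemma \ref{jumaeva:lem:1} directly, since that lemma only handles weights $k^\sigma$ with $\sigma<a-\frac12<\frac12$. I therefore read the hypothesis $\overset{\circ}{C}{}^{\,a}_2$ as asserting that the second $x$-derivatives $\varphi''$, $\psi''$, $f_{xx}(\cdot,t)$ lie in the H\"older class $\overset{\circ}{C}{}^{\,a}$, vanish at $x=0,1$, and that $\varphi,\psi,f$ themselves also vanish there. This is the interpretation suggested by the subscript $2$ and by the analogous one-variable space $\overset{\circ}{C}{}^{\,a}_2[0,\pi]$ mentioned after the definitions.

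The first step is to integrate by parts twice in the sine coefficients. The boundary terms drop because $\varphi(0)=\varphi(1)=0$ and the $\cos$ term is multiplied by these boundary values. This gives
\[
\varphi_k=-\frac{1}{(\pi k)^2}\,(\varphi'')_k,\qquad \psi_k=-\frac{1}{(\pi k)^2}\,(\psi'')_k,\qquad f_k(t)=-\frac{1}{(\pi k)^2}\,(f_{xx})_k(t),
\]
where $(\cdot)_k$ denotes the sine coefficient. Hence
\[
\sum_{k=1}^\infty k^2\bigl(|\varphi_k|+|\psi_k|+\|f_k\|\bigr)\le \frac{1}{\pi^2}\sum_{k=1}^\infty\Bigl(|(\varphi'')_k|+|(\psi'')_k|+\max_{0\le t\le T}|(f_{xx})_k(t)|\Bigr).
\]

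The second step applies Lemma \ref{jumaeva:lem:1} with $\sigma=0$, which is admissible since $a>\frac12$. It gives $\sum_k|(\varphi'')_k|\le C\|\varphi''\|_{\overset{\circ}{C}{}^{\,a}[0,1]}$, and the same bound for $\psi''$. For $f$, I apply Lemma \ref{jumaeva:lem:1} for each fixed $t$ to get $\sum_k|(f_{xx})_k(t)|\le C\,C(t)$, where $C(t)$ is the H\"older constant of $f_{xx}(\cdot,t)$. I then need $\sup_t C(t)<\infty$, which I will take as part of the hypothesis $f\in\overset{\circ}{C}{}^{\,a}_2(\bar\Omega)$, namely a finite H\"older norm uniformly in $t$.

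The main obstacle is the term $\|f_k\|=\max_t|f_k(t)|$. The sum of maxima is not bounded by the maximum of sums, so the pointwise-in-$t$ bound does not suffice. I would first try the observation that the constant in Lemma \ref{jumaeva:lem:1} is independent of $g$. If its proof in fact yields $|g_k|\le C k^{-\sigma'}\|g\|$ termwise for some summable rate (for instance via the $L^2$ Bernstein-type block estimate $\sum_{2^{j}\le k<2^{j+1}}|g_k|^2\le C2^{-2ja}\|g\|^2$), then for each $k$ we get $\max_t|(f_{xx})_k(t)|\le C\,\epsilon_k\sup_t C(t)$ with $\sum\epsilon_k<\infty$. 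Concretely, Cauchy--Schwarz on dyadic blocks gives
\[
\sum_{2^j\le k<2^{j+1}}\max_t|(f_{xx})_k(t)|\le \Bigl(2^j\sum_{2^j\le k<2^{j+1}}\max_t|(f_{xx})_k(t)|^2\Bigr)^{1/2}.
\]
The block estimate, which uses $\omega(\delta)\le C\delta^a$ uniformly in $t$, then controls this by $C\,2^{j(1/2-a)}\sup_tC(t)$, up to exchanging max and sum inside the block. That exchange is where I expect difficulty. I would bypass it by proving the dyadic estimate directly for $\max_t$, using the continuity of $f_{xx}$ in $t$. The resulting geometric series converges since $a>\frac12$.

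Since the majorant series is a numerical series independent of $(x,t)$, the Weierstrass M-test yields uniform convergence in $\bar\Omega$.
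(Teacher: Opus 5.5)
\textbf{Verdict: genuine gap, and at the $f$-term the statement itself fails.} Your treatment of $\varphi$ and $\psi$ is the paper's argument: two integrations by parts, then Lemma~\ref{jumaeva:lem:1} with $\sigma=0$. You also correctly locate the real difficulty in the term $\|f_k\|=\max_{0\le t\le T}|f_k(t)|$; the paper passes over it by simply writing $S_j^3\le C\|f_{xx}\|$.

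Your proposed remedy, a dyadic estimate proved ``directly for $\max_t$'' using continuity in $t$, cannot succeed. Under the stated hypotheses $\sum_k\max_t|(f_{xx})_k(t)|$ can diverge. Here is a counterexample.
\begin{itemize}
\item Take continuous tent functions $0\le\chi_k\le1$ with $\max\chi_k=1$ and $\mathrm{supp}\,\chi_k\subset[T/(k+1),T/k]$. Put $f(x,t)=-\sum_{k\ge1}(\pi k)^{-2}k^{-a}\chi_k(t)\sin(\pi kx)$.
\item For each $t$ at most one term is nonzero. Hence $f_{xx}(\cdot,t)=k^{-a}\chi_k(t)\sin(\pi k\,\cdot)$, whose $a$-H\"older seminorm is at most $2^{1-a}\pi^a$ uniformly in $t$ (use $|\sin u-\sin v|\le 2^{1-a}|u-v|^a$).
\item $f$ and $f_{xx}$ vanish at $x=0,1$. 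Both lie in $C(\bar\Omega)$, since $\sup_x|f_{xx}(x,t)|\to0$ as $t\to0$.
\item Nevertheless $k^2\max_t|f_k(t)|=\pi^{-2}k^{-a}$, and $\sum_k k^{-a}=\infty$ for $\frac12<a\le1$. For $a>1$ the H\"older class is trivial.
\end{itemize}
So the lemma, with $\|f_k\|$ read as $\max_{0\le t\le T}|f_k(t)|$, is false as stated. No argument, yours or the paper's, can close this step. The same example shows why your ``termwise summable rate'' idea cannot work: $k^{-a}\sin(\pi kx)$ has bounded H\"older norm, so $|g_k|\le Ck^{-a}$ is the best bound uniform over the unit ball, and it is not summable.

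What your dyadic argument does prove is weaker. The tail bound $\sum_{k\ge 2^J}|(f_{xx})_k(t)|\le C\,2^{J(1/2-a)}\sup_t C(t)$ is uniform in $t$, so $\sum_k k^2|f_k(t)|$ converges uniformly on $[0,T]$. If $\|f_k(t)\|$ in \eqref{jumaeva:eq:32} were read pointwise as $|f_k(t)|$, this would be exactly the lemma, and your argument would prove it. However, it gives no numerical majorant, so it cannot feed the Weierstrass M-test on the convolution terms of \eqref{jumaeva:eq:24}.

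To get the lemma in the stated form, strengthen the hypothesis on $f$. For example, require $f_{xx}(0,t)=f_{xx}(1,t)=0$ and $f_{xxx}(\cdot,t)$ H\"older of some order $\epsilon>0$, with norm bounded uniformly in $t$. A third integration by parts then gives $\max_t|(f_{xx})_k(t)|\le Ck^{-1-\epsilon}$, which is summable, and the rest of your argument goes through.
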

 \begin{proof}
Denote the partial sum of the series \eqref{jumaeva:eq:32} by $S_j$
\begin{equation*}
S_j=\sum_{k=1}^jk^2(|\varphi_k|+|\psi_k|+\|f_k(t)\|)=S_j^1+S_j^2+S_j^3.
\end{equation*}
To estimate the sum $S_j^1$, integration by parts is applied, which leads to the following equality:
\begin{equation*}
S_j^1=\sum_{k=1}^jk^2|\varphi_k|=2\sum_{k=1}^j k^2\left|\int_0^1 \varphi(x) \sin{\pi k x} dx \right|=\sum_{k=1}^j\frac{2}{\pi}\left|\int_0^1 \varphi''(x)\sin{\pi k x}dx\right|=\sum_{k=1}^j |\varphi''_k|.
\end{equation*}
Applying Lemma \ref{jumaeva:lem:1} to the last equality yields the following result:
\begin{equation*}
\sum_{k=1}^j k^2 |\varphi_k|\leq C\|\varphi''(x)\|_{{\overset{\circ}{C}{}^{\,a}[0,1]}}.
\end{equation*}
A similar argument applies to the partial sums $S_j^2$ and $S_j^3$:
\begin{equation*}
S_j^2=\sum_{k=1}^j k^2 |\psi_k|\leq C \|\psi''(x)\|_{{\overset{\circ}{C}{}^{\,a}[0,1]}},
     \quad
S_j^3=\sum_{k=1}^j k^2\|f_k(t)\|\leq C\|f_{xx}(x,t)\|_{{\overset{\circ}{C}{}^{\,a}(\bar{\Omega})}}.
\end{equation*}
Lemma \ref{jumaeva:lem:7} has been proved.
\end{proof}
Thus, the series for $u_{xx}$, $D_t^\beta u(x,t)$,  $D_t^\alpha u(x,t)$ converges uniformly on $\bar{\Omega}$.
The uniform convergence of the infinite series corresponding to $D_t^\beta(D_t^\alpha u(x,t))$ can be obtained from the equation \eqref{jumaeva:eq:1}.

The uniqueness of the solution to the problem can be proved via the completeness of the sine-Fourier series in $L_2(0,1)$.

Combining the above lemmas and estimates, we are now in a position to state the main theorem.

\begin{thm}
Let $\alpha+\beta>1$, $\alpha>\beta$, $a>\frac{1}{2}$, $\varphi(x),\psi(x)\in {\overset{\circ}{C}{}^{\,a}_2[0,1]}$ and $f(x,t)\in{\overset{\circ}{C}{}^{\,a}_2(\bar{\Omega})}$. Then there exists a unique  solution to the problem \eqref{jumaeva:eq:1}-\eqref{jumaeva:eq:4}, which is represented by  \eqref{jumaeva:eq:24}.\label{jumaeva:thm1}
\end{thm}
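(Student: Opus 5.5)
The proof splits into existence (the formal Fourier series \eqref{jumaeva:eq:24} is a regular solution in the sense of Definition \ref{jumaeva:def:1}) and uniqueness (via completeness of $\{\sin k\pi x\}$ in $L_2(0,1)$).

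\emph{Step 1: uniform estimates of the coefficients.} For each $k$, Lemma \ref{jumaeva:lem:6} with $\lambda=(\pi k)^2$ shows that $U_k(t)$ from \eqref{jumaeva:eq:23} is the unique solution of the $k$-th Cauchy problem. We need bounds on $U_k$, $D_t^\alpha U_k$ and $D_t^\beta U_k$ that are uniform in $k$.
\begin{itemize}
\item Apply Lemma \ref{jumaeva:lem:4} with $x=-(\pi k)^2t^{\alpha+\beta}$ and $y=-t^\alpha\in[-T^\alpha,0]$. Here the role of "$\alpha$" in that lemma is played by $\alpha+\beta$ and the role of "$\beta$" by $\alpha$.
\item Its hypothesis $0<\alpha+\beta<2\alpha<2$ is exactly the assumption $\alpha>\beta$ of the theorem. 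The assumption $\alpha+\beta>1$ is used to keep the parameter $\delta_1$ of the kernels above $1$ where Lemma \ref{jumaeva:lem:5}(3) is invoked.
\item This gives $|E_2|\le C/(1+(\pi k)^2t^{\alpha+\beta})$ for each kernel. Combined with the derivative formulas \eqref{jumaeva:eq:9}–\eqref{jumaeva:eq:10}, it yields \eqref{jumaeva:eq:26}, \eqref{jumaeva:eq:29} and \eqref{jumaeva:eq:30}.
\end{itemize}
For the $u_{xx}$ estimate I would multiply by $(\pi k)^2$ and use the crude majorant $k^2(|\varphi_k|+|\psi_k|+\|f_k\|)$. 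This way the decay of $E_2$ is not needed to absorb the $k^2$, and only boundedness is used.

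\emph{Step 2: convergence and verification.} By Lemma \ref{jumaeva:lem:7}, which rests on Lemma \ref{jumaeva:lem:1} applied to $\varphi''$, $\psi''$ and $f_{xx}$ with $\sigma=0$, the majorant series \eqref{jumaeva:eq:32} converges. The Weierstrass M-test then gives:
\begin{itemize}
\item uniform convergence on $\bar\Omega$ of the series for $u$, $D_t^\alpha u$ and $u_{xx}$;
\item uniform convergence of the series for $D_t^\beta u$ on $[0,1]\times[\varepsilon,T]$ for every $\varepsilon>0$, since the factor $t^{-\beta}$ appears there.
\end{itemize}
Hence $u\in C(\bar\Omega)$, $D_t^\alpha u\in C(\bar\Omega)$ and $u_{xx}\in C(\Omega)$. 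Termwise application of $D_t^\alpha$ is justified by uniform convergence of the differentiated series. Then $D_t^\beta(D_t^\alpha u)=f+u_{xx}-D_t^\beta u$ holds termwise, and its series converges uniformly on compacts of $\Omega$, so \eqref{jumaeva:eq:1} holds.

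The remaining conditions follow directly:
\begin{itemize}
\item the boundary condition \eqref{jumaeva:eq:4} holds because every term contains $\sin k\pi x$;
\item the initial conditions \eqref{jumaeva:eq:2}–\eqref{jumaeva:eq:3} follow by passing $t\to0$ termwise, using $U_k(0)=\varphi_k$, $D_t^\alpha U_k(0)=\psi_k$, and the uniform convergence on $\bar\Omega$.
\end{itemize}

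\emph{Step 3: uniqueness.} Let $u$ be the difference of two regular solutions, so it solves the homogeneous problem. Set $u_k(t)=2\int_0^1u(x,t)\sin k\pi x\,dx$. Multiply the equation by $\sin k\pi x$, integrate over $(\varepsilon,1-\varepsilon)$, integrate by parts twice using \eqref{jumaeva:eq:4}, and let $\varepsilon\to0$. This shows $u_k$ solves \eqref{jumaeva:eq:11} with $f=0$ and $\varphi=\psi=0$. By the uniqueness part of Lemma \ref{jumaeva:lem:6}, $u_k\equiv0$. Completeness of the sine system then gives $u\equiv0$.

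\emph{Main obstacle.} The delicate part is Step 1 together with the passage to the limit under fractional derivatives.
\begin{itemize}
\item Interchanging $D_t^\alpha$ and $D_t^\beta$ with the series requires uniform convergence of the differentiated series.
\item The $t^{-\beta}$ singularity in $D_t^\beta U_k$ prevents uniformity up to $t=0$, so I would work on $[\varepsilon,T]$ and use that $t^{-\beta}$ is integrable to justify the Caputo integrals.
\item Lemma \ref{jumaeva:lem:4} must be checked for each parameter set $\delta_1\in\{1,\alpha+1,\alpha+\beta,1-\beta,\alpha-\beta+1,\alpha,\beta\}$. I would try the Hankel representation \eqref{jumaeva:eq:7} with $z^{-\gamma}$ replaced by $z^{-\delta_1}$ and confirm that the same contour argument goes through.
\end{itemize}
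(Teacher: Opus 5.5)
Your plan follows the paper's argument step for step: Fourier expansion, Lemma \ref{jumaeva:lem:6} for $U_k$, Lemmas \ref{jumaeva:lem:4}--\ref{jumaeva:lem:5} for \eqref{jumaeva:eq:26}, \eqref{jumaeva:eq:29}, \eqref{jumaeva:eq:30}, the majorant \eqref{jumaeva:eq:32} via Lemma \ref{jumaeva:lem:7}, the equation for $D_t^\beta(D_t^\alpha u)$, and completeness for uniqueness. However, Step 2 contains a step that fails as written, and the paper's text makes the same leap.

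The estimate \eqref{jumaeva:eq:30} you import carries the factor $t^{-\alpha}|\varphi_k|$, just as \eqref{jumaeva:eq:29} carries $t^{-\beta}$. So the M-test gives uniform convergence of the $D_t^\alpha u$ series only on $[0,1]\times[\varepsilon,T]$. Consequently neither $D_t^\alpha u\in C(\bar\Omega)$, which Definition \ref{jumaeva:def:1} demands, nor condition \eqref{jumaeva:eq:3} follows. Your $\varepsilon$-cutoff is harmless for $D_t^\beta u$ but cannot help here, since it is exactly the behaviour at $t=0$ that is needed. In fact the formula behind \eqref{jumaeva:eq:30} gives $D_t^\alpha U_k(t)\sim\varphi_k t^{-\alpha}/\Gamma(1-\alpha)$ as $t\to0$, which contradicts the value $D_t^\alpha U_k(0)=\psi_k$ you use. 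The source is that \eqref{jumaeva:eq:9} with $\delta_1=1$ is the Riemann--Liouville rule: it sends the constant $m=n=0$ term of $E_2$ to $t^{-\alpha}/\Gamma(1-\alpha)$, whereas the Caputo derivative annihilates it.

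The missing step is to compute the Caputo derivatives correctly, for instance via $\mathcal{L}[D_t^\alpha U_k]=s^\alpha\hat U_k-s^{\alpha-1}\varphi_k$. Here the $\varphi_k$-terms cancel down to $-(\pi k)^2\varphi_k s^{\alpha-1}/(s^{\alpha+\beta}+s^\beta+(\pi k)^2)$. Write $E^{(\delta)}(t)$ for the function $E_2$ of \eqref{jumaeva:eq:23} with $\delta_1=\delta$, evaluated at $\bigl(-(\pi k)^2t^{\alpha+\beta},-t^\alpha\bigr)$. Then Lemma \ref{jumaeva:lem:2}, together with the series argument of Lemma \ref{jumaeva:lem:6}, gives
\[
D_t^\alpha U_k(t)=-(\pi k)^2\varphi_k\,t^{\beta}E^{(\beta+1)}(t)+\psi_k\,E^{(1)}(t)+\int_0^t\tau^{\beta-1}E^{(\beta)}(\tau)f_k(t-\tau)\,d\tau .
\]
Hence $|D_t^\alpha U_k(t)|\le C\bigl(k^2|\varphi_k|+|\psi_k|+\|f_k\|\bigr)$ on all of $[0,T]$, with value $\psi_k$ at $t=0$. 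This is dominated by \eqref{jumaeva:eq:32} and yields $D_t^\alpha u\in C(\bar\Omega)$ and \eqref{jumaeva:eq:3}.

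The same computation gives $D_t^\beta U_k=-(\pi k)^2\varphi_k t^{\alpha}E^{(\alpha+1)}+\psi_k t^{\alpha-\beta}E^{(\alpha-\beta+1)}+\int_0^t\tau^{\alpha-1}E^{(\alpha)}(\tau)f_k(t-\tau)\,d\tau$. This is bounded up to $t=0$ precisely because $\alpha>\beta$, so no cutoff is needed there either.

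A smaller point concerns the source term. Lemma \ref{jumaeva:lem:1} at fixed $t$ controls $\sum_k k^2|f_k(t)|$, not $\sum_k k^2\max_t|f_k(t)|$, and the latter can diverge for $f$ satisfying the hypotheses. So keep the $k$-sum inside the $\tau$-integral, and use Lemma \ref{jumaeva:lem:1} with some $\sigma\in(0,a-\tfrac12)$ to make the tails uniformly small.
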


\section{Numerical implementation}\label{jumaeva:sec:5}

We present a fully discrete scheme for the sequential multi-term fractional model
\begin{equation}
D_t^\beta\!\big(D_t^\alpha u(x,t)\big) + D_t^\beta u(x,t) - u_{xx}(x,t) = f(x,t), 
\qquad 0<x<1,\ 0<t\le T,\label{jumaeva:eq:33}
\end{equation}
subject to $u(0,t)=u(1,t)=0$, $u(x,0)=\varphi(x)$ and $D_t^\alpha u(x,0)=\psi(x)$.
The sequential operator is treated by introducing the auxiliary variable
\begin{equation*}
v(x,t) = D_t^\alpha u(x,t),
\end{equation*}
which converts \eqref{jumaeva:eq:33} into the system
\begin{equation}
D_t^\beta v(x,t) + D_t^\beta u(x,t) - u_{xx}(x,t) = f(x,t), 
\qquad v(x,t)=D_t^\alpha u(x,t).\label{jumaeva:eq:34}
\end{equation}
This reformulation avoids a direct discretisation of the nested history term and enables a robust time-marching implementation. 

\subsection{Spatial discretisation}
Let $\mathcal{T}_h$ be a uniform partition of $(0,1)$ with mesh size $h$ and let 
$V_h\subset H_0^1(0,1)$ be the standard conforming finite element space of continuous piecewise linear functions (P1) that vanish at $x=0,1$.
We seek $ u_h(\cdot,t), v_h(\cdot,t)\in V_h.$

Multiplying the first equation in \eqref{jumaeva:eq:34} by a test function $\phi\in V_h$ and integrating by parts yields
\begin{equation}
\big(D_t^\beta v_h(t),\phi\big) + \big(D_t^\beta u_h(t),\phi\big) + \big(\partial_x u_h(t),\partial_x \phi\big)
= (f(t),\phi),\qquad \forall \phi\in V_h,\label{jumaeva:eq:35}
\end{equation}
together with the discrete relation $v_h(t)=D_t^\alpha u_h(t)$ in coefficient form.
With the nodal basis $\{\phi_i\}_{i=1}^{N_h}$ on interior nodes, we define the mass and stiffness matrices
\begin{equation*}
M_{ij}=(\phi_j,\phi_i),\qquad K_{ij}=(\phi'_j,\phi'_i),
\end{equation*}
and the load vector $F_i(t)=(f(\cdot,t),\phi_i)$.
Let $U(t),V(t)\in\mathbb{R}^{N_h}$ denote the coefficient vectors of $u_h$ and $v_h$.
Then \eqref{jumaeva:eq:35} becomes
\begin{equation}
M D_t^\beta V(t) + M D_t^\beta U(t) + K U(t) = F(t),\qquad V(t)=D_t^\alpha U(t).\label{jumaeva:eq:36}
\end{equation}

\subsection{Graded time grid and L1 Scheme of Caputo derivatives}
To resolve the reduced regularity near $t=0$ that is typical for fractional-order evolution, we use a graded mesh
\begin{equation}
t_n = T\left(\frac{n}{N}\right)^r,\qquad n=0,1,\dots,N,\qquad r\ge 1,\label{jumaeva:eq:37}
\end{equation}
with $\Delta t_n=t_n-t_{n-1}$.
In the reported computations, we take 
\begin{equation*}
r = 2-\min\{\alpha,\beta\} ,
\end{equation*}
which clusters time nodes near $t=0$ and improves the accuracy of history-dependent terms.

For $0<\gamma<1$, the Caputo derivative on the nonuniform mesh \eqref{jumaeva:eq:37} is approximated by the L1-type formula
\begin{equation}
D_t^\gamma w(t_n)\ \approx\  \sum_{k=1}^{n} a^{(\gamma)}_{n,k}\,\big(w^k-w^{k-1}\big),
\qquad
a^{(\gamma)}_{n,k}=
\frac{(t_n-t_{k-1})^{1-\gamma}-(t_n-t_k)^{1-\gamma}}{\Gamma(2-\gamma)\,\Delta t_k}.\label{jumaeva:eq:38}
\end{equation}
We apply \eqref{jumaeva:eq:38} with $\gamma=\alpha$ and $\gamma=\beta$ to approximate $D_t^\alpha U$ and $D_t^\beta U$, $D_t^\beta V$.

\subsection{Fully discrete scheme}
 We define at each $t_n$:
$ V^n := D_t^\alpha U^n.$
The fully discrete version of \eqref{jumaeva:eq:36} reads
\begin{equation}
M\big(D_t^\beta V^n + D_t^\beta U^n\big) + K U^n = F^n,\qquad n=1,\dots,N,\label{jumaeva:eq:39}
\end{equation}
where $F^n_i=(f(\cdot,t_n),\phi_i)$.
Using the standard history splitting of \eqref{jumaeva:eq:38}, the method reduces to solving a linear system of the form at each time step.
\begin{equation*}
A_n U^n = b_n,\qquad A_n = K + c_n M,
\end{equation*}
with $c_n=a^{(\beta)}_{n,n}\big(1+a^{(\alpha)}_{n,n}\big)$ and a right-hand side $b_n$ that depends only on previously computed values (history terms). After obtaining $U^n$, the auxiliary vector $V^n$ is updated by $V^n=D_t^\alpha U^n$.

\begin{figure}[ht]
\hrule \vspace{4pt}
\noindent \textbf{Algorithm 1} 
\vspace{4pt} \hrule \vspace{6pt}
\begin{minipage}{\textwidth}
\noindent \textbf{Require:} Parameters $T, M, N$; orders $\alpha, \beta$ ($0<\beta< \alpha<1$, $\alpha+\beta>1$); grading $r>1$; data $\phi, \psi, f$. \\
\textbf{Ensure:} Discrete numerical solutions $\{U^n\}_{n=0}^N$ and $\{V^n\}_{n=0}^N$. \\[4pt]
1: \textbf{Initialize:} Generate meshes $x_i = i/M$ and $t_n = T(n/N)^r$. \\
2: Assemble FEM mass matrix $\mathbf{M}$ and stiffness matrix $\mathbf{K}$. \\
3: Set $U^0 = \phi(\mathbf{x})$ and $V^0 = \psi(\mathbf{x})$. \\
4: \textbf{for} $n=1, \dots, N$ \textbf{do} \\
5:  Compute L1 weights $a_{\alpha,kn}, a_{\beta,kn}$ for the current time step. \\
6:  Evaluate history  terms from previous steps ($k < n$):
$$    \mathcal{H}_\alpha^n = -a_{\alpha,nn} U^{n-1} + \sum_{k=1}^{n-1} a_{\alpha,kn} (U^k - U^{k-1})$$
\hspace*{1.5em} (compute $\mathcal{H}_{\beta U}^n$ and $\mathcal{H}_{\beta V}^n$ analogously using $a_{\beta,kn}$). \\
7:  Assemble load vector $\mathbf{F}^n$ via 2 point Gauss quadrature. \\
8:  Form and solve the linear system for $U^n$:
\begin{equation*}
    \bigl[ \mathbf{K} + a_{\beta,nn}(1+a_{\alpha,nn})\mathbf{M} \bigr] U^n = \mathbf{F}^n - \mathbf{M}\bigl(a_{\beta,nn} \mathcal{H}_\alpha^n + \mathcal{H}_{\beta U}^n + \mathcal{H}_{\beta V}^n\bigr)
\end{equation*}
9: Update the auxiliary variable: $$V^n = a_{\alpha,nn} U^n + \mathcal{H}_\alpha^n.$$ \\
10: \textbf{end for}
\end{minipage}
\vspace{6pt} \hrule 
\end{figure}

To summarize the numerical methodology discussed in the previous sections, the complete step-by-step procedure for the fully discrete scheme is outlined in Algorithm 1.

To characterise the solution behaviour and to quantify the influence of the fractional orders, we employ complementary local and global diagnostics. 
We track the pointwise trace $u_h(x_0,t_n)$ at $x_0=0.5$ and compute FEM-consistent global measures via 
\begin{equation}
\left\lVert u_h(\cdot,t_n)\right\rVert_{L^2}^2\approx (U^n)^{T} M U^n, \qquad
\left\lvert u_h(\cdot,t_n)\right\rvert_{H^1}^2\approx (U^n)^{T} K U^n.\label{jumaeva:eq:40}
\end{equation}
Moreover, to visualise the coupled dependence on $(\alpha,\beta)$, we define $Q(\alpha,\beta)=u_h(0.5,t_0)$ at $t_0=0.5$ and represent it with a heatmap and iso-response contour curves.

The proposed algorithm was implemented in MATLAB to perform the numerical simulations and generate the results presented in the subsequent sections.

To validate the implementation, we employ a smooth manufactured solution that is compatible with the boundary conditions and the regularity assumptions.
\begin{ex}
     Let $ g(x)=x^3(1-x)^3$, $u_{ex}(x,t)=g(x)\big(1+t^\alpha+t^{\alpha+\beta}\big)$. Then
\begin{equation*}
\varphi(x)=u_{ex}(x,0)=g(x),
\end{equation*} 
\begin{equation*}
\psi(x)=D_t^\alpha u_{ex}(x,0)=\Gamma(\alpha+1)g(x),
\end{equation*}
so that both initial conditions are nontrivial and satisfy the boundary compatibility.
The forcing term is obtained by substituting $u_{ex}$ into \eqref{jumaeva:eq:33}:
\begin{equation}
\begin{aligned}
f(x,t)=\;&g(x)\Gamma(\alpha+\beta+1)
+ g(x)\frac{\Gamma(\alpha+1)}{\Gamma(\alpha+1-\beta)}\,t^{\alpha-\beta}
+ g(x)\frac{\Gamma(\alpha+\beta+1)}{\Gamma(\alpha+1)}\,t^\alpha\\
&- g''(x)\big(1+t^\alpha+t^{\alpha+\beta}\big).
\end{aligned}\label{jumaeva:eq:41}
\end{equation}
\end{ex}
In the parameter study, we focus on the admissible wedge
$ \alpha+\beta>1$ and $\alpha>\beta$, which is consistent with the theoretical regime and yields stable early-time computations in the presented tests.

First, we verified the accuracy of our method using a manufactured solution.To validate the proposed L1-FEM scheme, a manufactured-solution test was conducted with $\alpha=0.8$, $\beta=0.4$, $N=400$, and $M=200$. By utilizing a graded time mesh ($r=1.6$) to resolve the initial singularity, the method achieved highly accurate results. This successfully confirms the precision and reliability of our numerical approach. 

\begin{table} [h]
\caption{Manufactured-solution verification.}
\begin{tabular}{rrrrrrr}
{$\alpha$} & {$\beta$} & {$N$} & {$M$} & {$r$} & {$\max_{t}\|e\|_{L^2}$} & {$\max_{t}|e|_{H^1}$} \\ \midrule
0.800 & 0.400 & 400 & 200 &  1.600 & 1.732e-6 & 6.344e-6 \\
\end{tabular}
\label{jumaeva:table:1}
\end{table}

Figure~\ref{jumaeva:figure:1} shows our numerical result ($u_h$) compared side-by-side with the exact analytical solution ($u_{\mathrm{ex}}$). The two curves overlap almost perfectly across the entire time range. 
This overlap confirms that our software and the chosen calculation method are highly reliable, even during the very beginning of the process.

\begin{figure}[ht]
\centering
\includegraphics[width=0.5\textwidth]{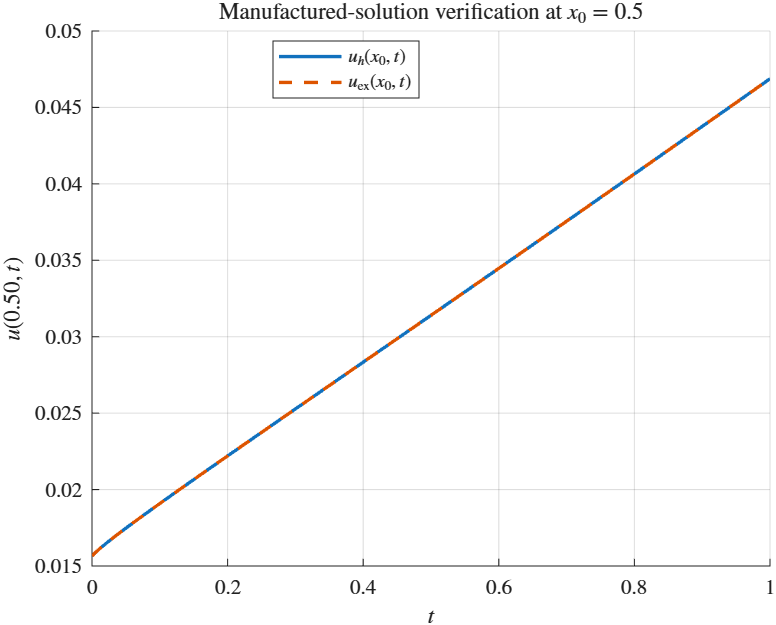}
\caption{Verification using a manufactured solution: numerical trace $u_h$ versus the exact trace $u_{\mathrm{ex}}$.}
\label{jumaeva:figure:1}
\end{figure}

Next, we studied how the fractional orders $\alpha$ and $\beta$ influence the system. 
Figure~\ref{jumaeva:figure:2}(a) shows the effect of $\alpha$. When $\alpha$ is smaller, the process slows down because the system has a "stronger memory" of its past. A key observation here is that the curves cross each other over time. This shows that $\alpha$ doesn't just change the numbers; it fundamentally reshapes how the solution evolves from start to finish.
Figure~\ref{jumaeva:figure:2}(b) focuses on the effect of $\beta$. This parameter also strongly dictates how fast the system fades. Just like with $\alpha$, the way these curves cross indicates that $\beta$ controls the physical behavior and "shape" of the process, not just its speed.

\begin{figure}[ht]
\centering
\begin{minipage}[b]{0.45\textwidth}
    \centering
    \includegraphics[width=\textwidth]{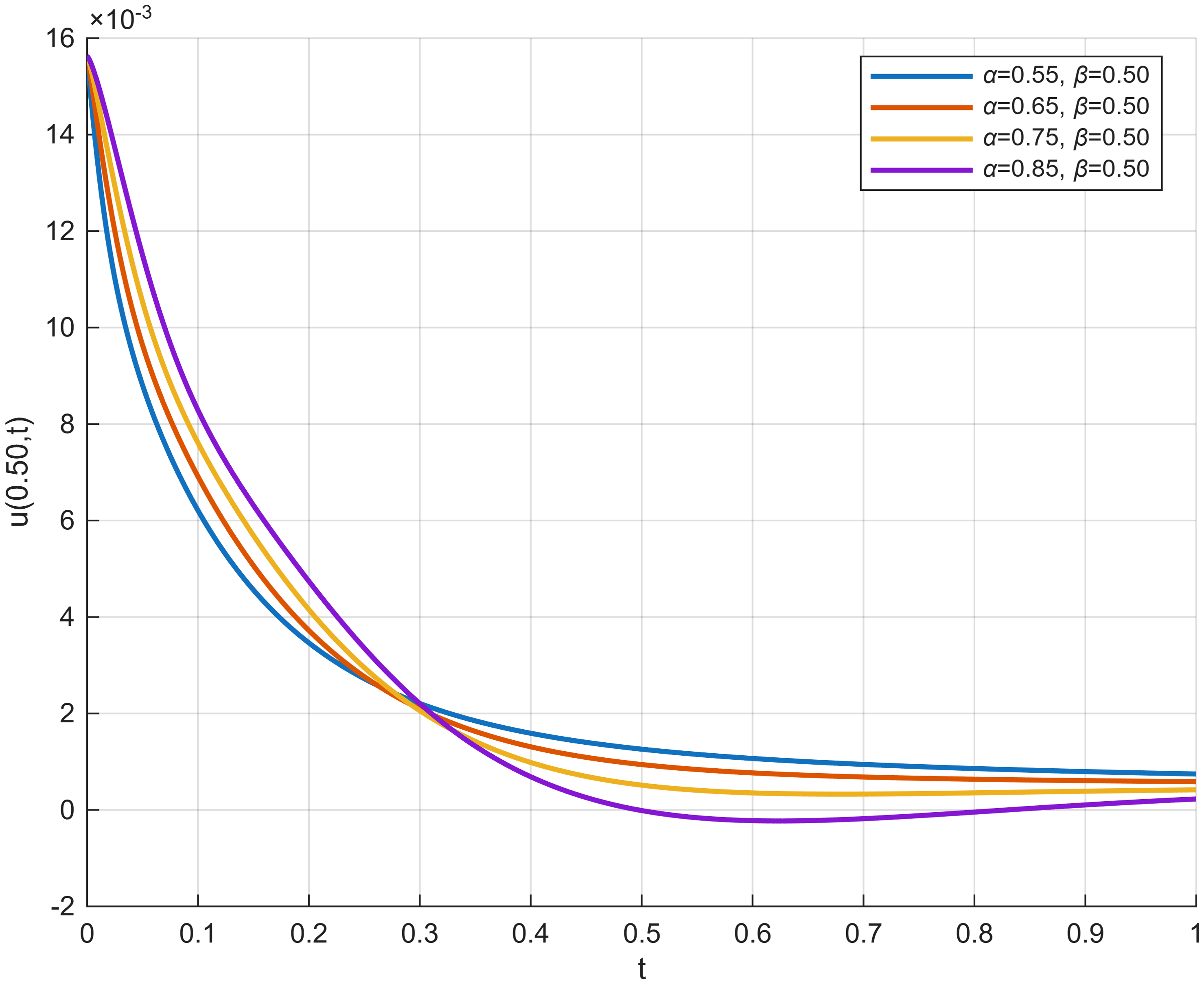}\\
      
\end{minipage}
\hfill
\begin{minipage}[b]{0.45\textwidth}
    \centering
    \includegraphics[width=\textwidth]{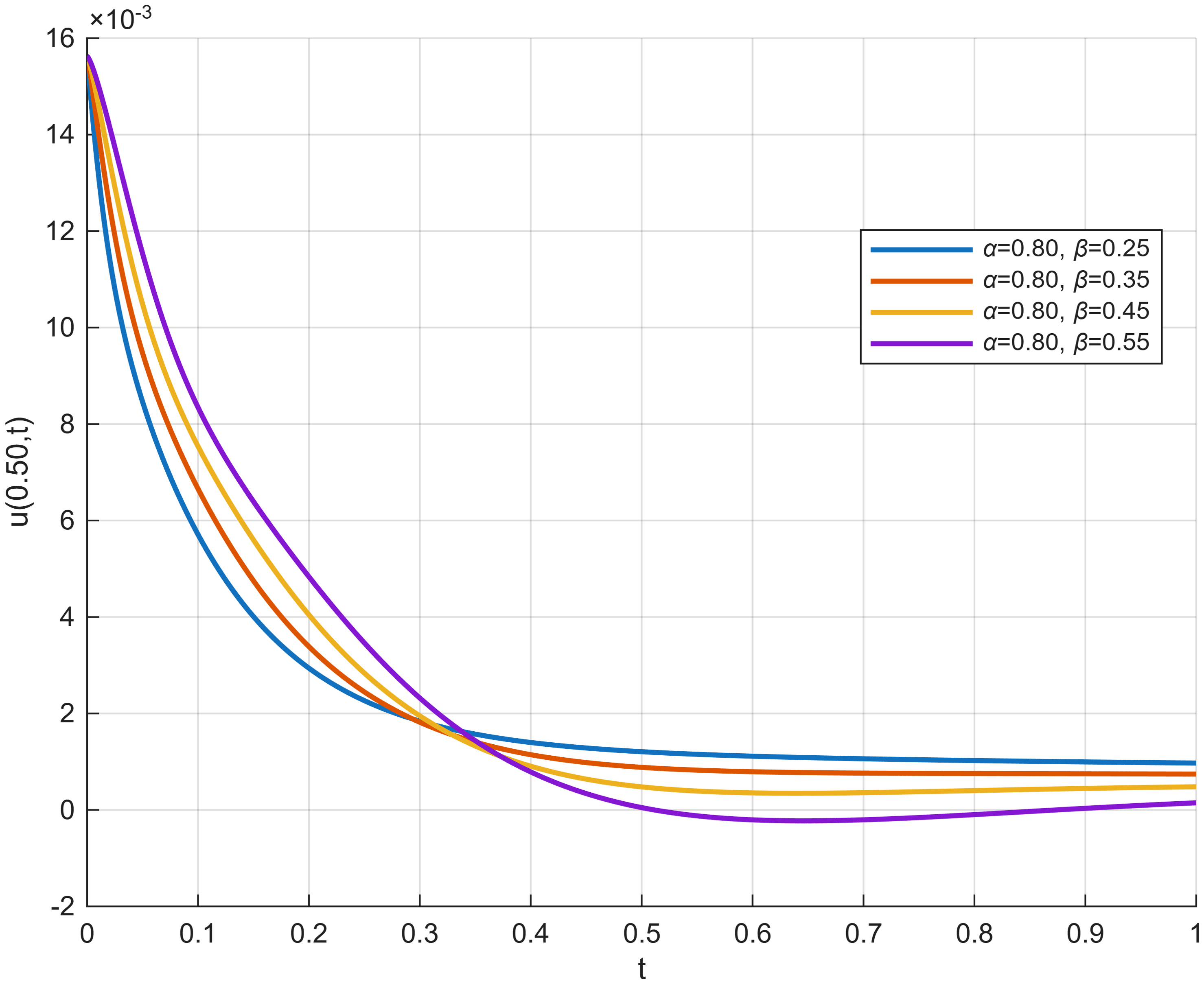}\\
        
\end{minipage}
\caption{Pointwise traces $u_h(0.5,t)$ in the admissible region. (A) Effect of $\alpha$ on the solution. (B) Effect of $\beta$ on the solution.}
\label{jumaeva:figure:2}
\end{figure}

When we analyzed the total energy of the system (Figure~\ref{jumaeva:figure:3}), we found a very interesting behavior. Usually, energy is expected to simply fade away toward zero. However, for higher values of $\alpha$ and $\beta$, the energy actually drops to zero and then slightly "rebounds." This tells us that our model has a form of "inertia"—the system doesn't just stop at zero; it has enough momentum to cross the equilibrium point, similar to how a swinging pendulum behaves.

\begin{figure}[ht]
\centering
\begin{minipage}[b]{0.45\textwidth}
    \centering
    \includegraphics[width=\textwidth]{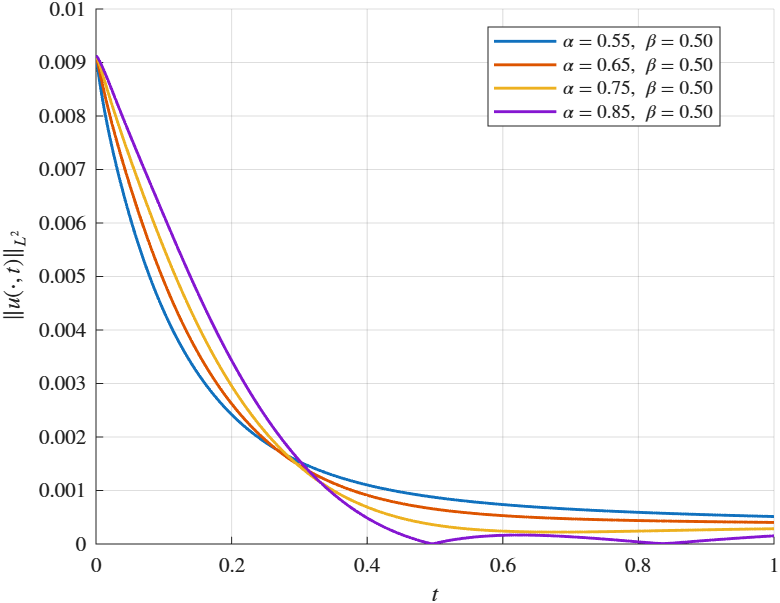}\\ 
\end{minipage}
\hfill
\begin{minipage}[b]{0.45\textwidth}
    \centering
    \includegraphics[width=\textwidth]{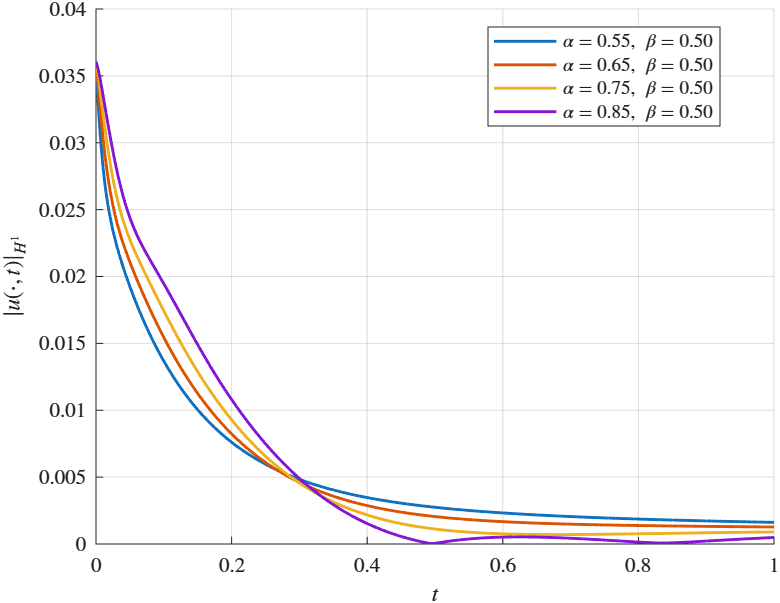}\\
\end{minipage}
\caption{Energy decay and the "rebound" effect over time.     (A) Energy dependence ( $L^2$ norm) on $\alpha$.     (B) $H^1$ norm for varying $\alpha$.}
\label{jumaeva:figure:3}
\end{figure}

Table \ref{jumaeva:table:2} presents a comparative analysis between uniform ($r=1$) and graded ($r=1.60$) time meshes for Example 1. The results clearly demonstrate that the graded mesh significantly outperforms the uniform one in both accuracy and convergence order. For a fixed $N=400$, the $L^2$-error decreases by a factor of approximately $7.75$ (from $1.342 \times 10^{-5}$ to $1.732 \times 10^{-6}$), with a similar improvement of $7.84$ times observed in the $H^1$-seminorm. Furthermore, the empirical convergence rates confirm the theoretical necessity of graded meshes. While the uniform mesh only yields a suboptimal rate of approximately $0.75$, the graded mesh restores the convergence order to above $1.10$. This enhancement proves that grading the temporal grid near $t=0$ effectively compensates for the initial weak singularity of the sequential Caputo derivatives, ensuring both higher precision and faster error decay.

\begin{table} [!ht]
\caption{Uniform vs graded time meshes for Example~1 ($\alpha=0.8$, $\beta=0.4$).}
\begin{tabular}{rlcccc}
{$N$} & {mesh} & {$\max_t\|e\|_{L^2}$} & {$\max_t|e|_{H^1}$} & {rate$(L^2)$} & {rate$(H^1)$} \\ \midrule
100 & uniform $(r=1)$     & 3.848e-5 & 1.331e-4 & \multicolumn{1}{c}{--} & \multicolumn{1}{c}{--} \\
200 & uniform $(r=1)$     & 2.297e-5 & 8.240e-5 & 0.744 & 0.692 \\
400 & uniform $(r=1)$     & 1.342e-5 & 4.972e-5 & 0.776 & 0.729 \\
100 & graded $(r=1.60)$   & 8.290e-6 & 2.900e-5 & \multicolumn{1}{c}{--} & \multicolumn{1}{c}{--} \\
200 & graded $(r=1.60)$   & 3.824e-6 & 1.374e-5 & 1.116 & 1.078 \\
400 & graded $(r=1.60)$   & 1.732e-6 & 6.344e-6 & 1.143 & 1.115 \\
\end{tabular}
\label{jumaeva:table:2}
\end{table}
Finally, to visualize the joint influence of $(\alpha,\beta)$, we compute the diagnostic quantity $ Q(\alpha,\beta)=u_h(0.5,t_0)$, $t_0=0.5$. We used the heatmap and contour plot in Figure~\ref{jumaeva:figure:5}. These maps reveal a "trade-off": different combinations of $\alpha$ and $\beta$ can produce the exact same result. For example, you can increase one and decrease the other to keep the outcome the same. This is a very useful finding for real-world modeling because it helps us identify which pairs of parameters are most effective for specific situations.
\begin{figure}[ht]
\centering
\begin{minipage}[b]{0.45\textwidth}
    \centering
    \includegraphics[width=\textwidth]{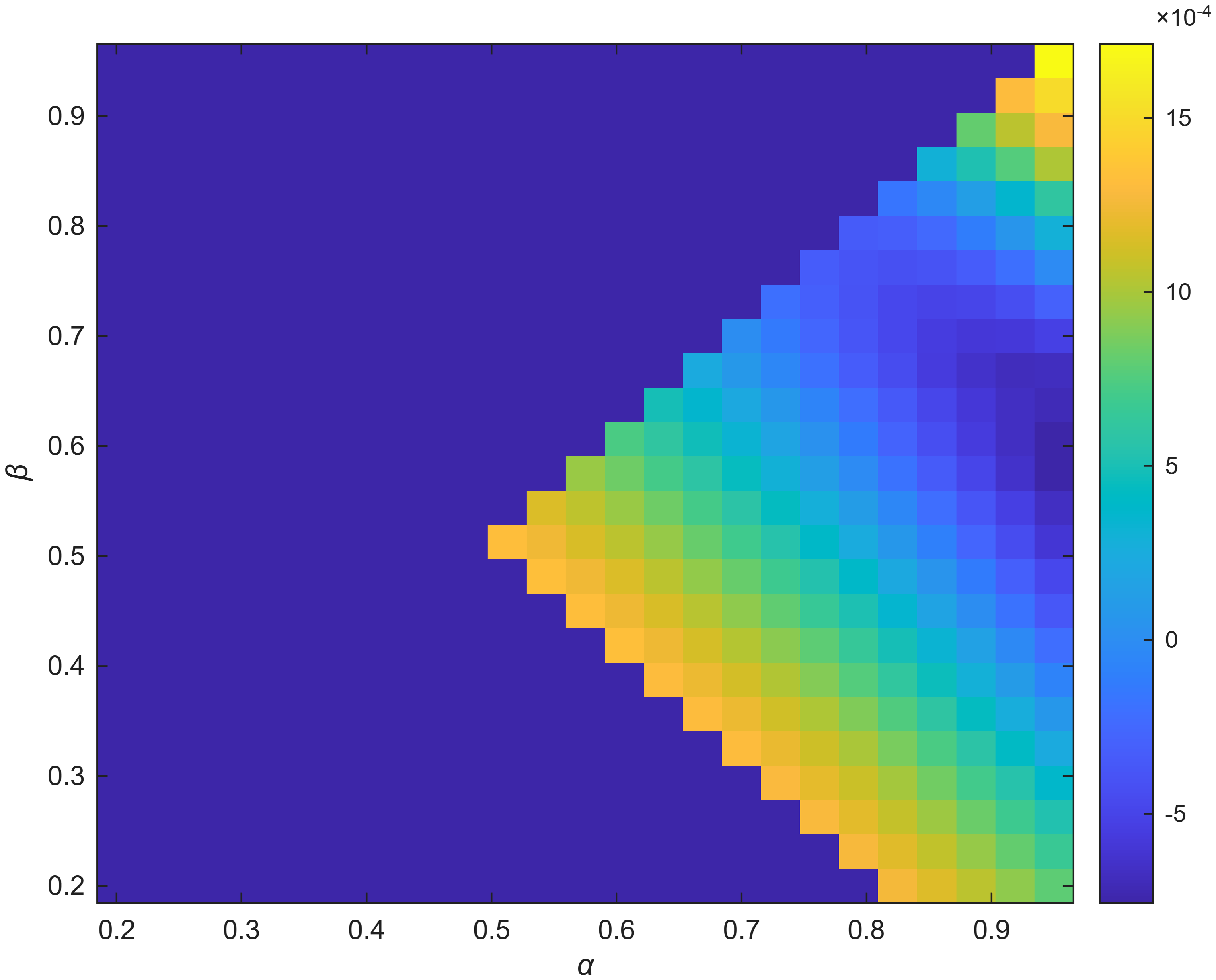}\\
\end{minipage}
\hfill
\begin{minipage}[b]{0.45\textwidth}
    \centering
    \includegraphics[width=\textwidth]{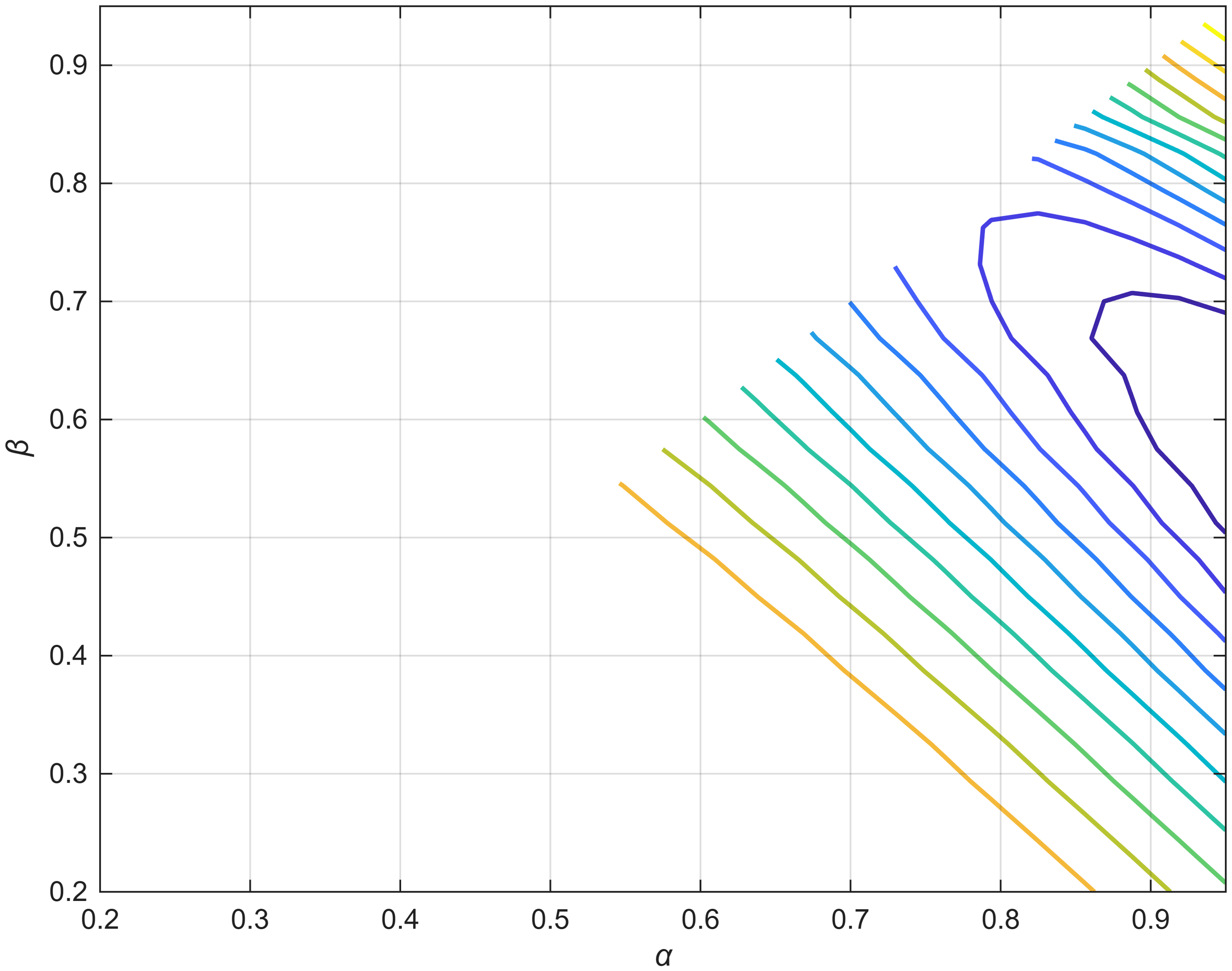}\\  
\end{minipage}
\caption{Combined influence of $\alpha$ and $\beta$ over the admissible wedge. (A) Heatmap of numerical results. (B) Iso-response curves for parameters.}
\label{jumaeva:figure:5}
\end{figure}

\section{Conclusion}\label{jumaeva:sec:6}
\par In this study, we investigated a boundary–initial value problem for a fractional differential equation with sequential Caputo derivatives. Using the Fourier method, we proved the existence and uniqueness of a regular solution. We established the system's well-posedness under the admissible conditions $\alpha+\beta>1$ and $\alpha>\beta$, and derived an explicit eigenfunction-series representation using bivariate Mittag-Leffler functions.

In the numerical section, we developed a discrete scheme based on a sequential reformulation that simplifies the treatment of sequential fractional derivatives via an auxiliary variable. We employed a graded time mesh and L1 approximation to maintain accuracy near the initial singularity at $t=0$. In our numerical tests, we observed a transition from purely diffusive decay to a damped oscillatory regime as the fractional orders increase, where the solution profile undergoes a sign change. These results demonstrate that $\alpha$ and $\beta$ uniquely dictate the system's energy decay, providing a rigorous basis for future research into sequential fractional models.

\end{document}